\documentclass[11pt,a4paper]{amsart}
\usepackage{a4wide}
\usepackage[T1]{fontenc}
\usepackage[utf8]{inputenc}
\usepackage{amsmath,amssymb,mathtools}
\usepackage{dsfont}
\usepackage{enumitem}
\usepackage{url}
\usepackage[colorlinks=true,linkcolor=blue,citecolor=blue,urlcolor=blue]{hyperref}
\newtheorem{theorem}{Theorem}[section]
\newtheorem{lemma}[theorem]{Lemma}
\newtheorem{proposition}[theorem]{Proposition}
\newtheorem{corollary}[theorem]{Corollary}

\theoremstyle{definition}

\newtheorem{problem}[theorem]{Problem}
\theoremstyle{remark}
\newtheorem{remark}[theorem]{Remark}

\theoremstyle{plain}
\newtheorem{maintheorem}{Theorem}

\newcommand{\clop}{\operatorname{Clop}}

\newcommand{\onto}{\twoheadrightarrow}

\newcommand{\fa}{\mathcal F}

\newcommand{\ptop}{\operatorname{p}}
\newcommand{\indicator}{\mathds{1}}

\title[Talagrand compacta, 2DCP, and pointwise quotients]{Talagrand compacta, 2DCP,\\ and pointwise quotients}

\author[T.~Kania]{Tomasz Kania\textsuperscript{*}}
\address[T.~Kania]{Mathematical Institute\\Czech Academy of Sciences\\\v{Z}itn\'a 25 \\115 67 Praha 1\\Czech Republic and Institute of Mathematics and Computer Science\\ Jagiellonian University\\ {\L}ojasiewicza 6, 30-348 Krak\'{o}w, Poland}
\email{kania@math.cas.cz, tomasz.marcin.kania@gmail.com}

\thanks{\textsuperscript{*}Corresponding author.}

\author[J.~K\k{a}kol]{Jerzy K\k{a}kol}
\address[J.~K\k{a}kol]{Faculty of Mathematics and Informatics, A. Mickiewicz University,
61-614 Pozna\'{n}, Poland}
\email{kakol@amu.edu.pl}

\date{Draft, July 2026}

\begin{document}

\begin{abstract}
We revisit Talagrand's CH compactum as a test object for the two-disjoint-copies property and for pointwise quotient questions.  The two-disjoint-copies property, or 2DCP, is a topological sufficient condition for the existence of infinite-dimensional metrisable quotients of spaces \(C_{\ptop}(X)\); recent work asks whether Talagrand's compactum has this property.  Assuming \(\diamondsuit(S)\) for a stationary co-stationary \(S\subseteq\omega_1\), we carry out Talagrand's inverse-limit construction with additional diagonalisation.  The resulting compactum \(T\) keeps Talagrand's conclusions: \(C(T)\) is Grothendieck, the weak-star compact ball \(M_1(T)\) contains no copy of \(\beta\omega\), and \(T\) has no non-trivial convergent sequences.  At the same time, no two disjoint non-metrisable closed subspaces of \(T\) are homeomorphic; hence \(T\) has no 2DCP and is not locally homogeneous.  We also give a ZFC example of a perfect compact space with 2DCP which is not locally homogeneous and contains neither \(\beta\omega\) nor \(2^\omega\).  Finally, for every Tychonoff space \(X\), we show that a continuous linear surjection \(C_{\ptop}(X)\to(\ell_p)_{\ptop}\), \(1\leqslant p<\infty\), yields a norm-one weak-star null sequence of finitely supported signed measures.  Thus \(C_{\ptop}(X)\) has the Josefson--Nissenzweig property and a quotient isomorphic to \((c_0)_{\ptop}\).  Consequently Talagrand compacta have no classical pointwise sequence quotients \((c_0)_{\ptop}\), \((\ell_p)_{\ptop}\), or \((\ell_\infty)_{\ptop}\).  The full metrisable quotient problem for these \(C_{\ptop}\)-spaces remains open.  Several open problems are included.
\end{abstract}

\subjclass[2020]{Primary 54D30, 46E15, 54C35; Secondary 03E35, 46A03, 46B20, 46B26.}
\keywords{Talagrand compactum; two-disjoint-copies property; 2DCP; Grothendieck space; Josefson--Nissenzweig property; pointwise convergence topology; \(C_{\ptop}(X)\)-spaces; Efimov compactum; Jensen diamond.}

\maketitle

\section{Introduction}

Let \(X\) be a Tychonoff space.  Following Banakh, the second-named author, and \'{S}liwa, and the recent systematic study of the second-named author, Kurka, and \'{S}liwa, we say that \(X\) has the \emph{two-disjoint-copies property}, abbreviated 2DCP, if there is a sequence \((K_n)_{n\in\omega}\) of non-empty compact subsets of \(X\) such that each \(K_n\) contains two disjoint subsets homeomorphic to \(K_{n+1}\).  The definition is topological, but its motivation is the separable quotient problem for spaces of continuous functions with the pointwise topology.  The second-named author, Kurka, and \'{S}liwa recall the question whether, for every infinite compact space \(X\), the locally convex space \(C_{\ptop}(X)\) admits an infinite-dimensional metrisable, hence separable, quotient \cite[Problem 1]{KKS2026}.  Banakh, the second-named author, and \'{S}liwa proved that 2DCP is a sufficient compact-space criterion: if \(X\) has 2DCP, then \(C_{\ptop}(X)\) has an infinite-dimensional metrisable quotient \cite[Theorem 2]{BKS2018}; see also \cite[Theorem 3]{KKS2026}.

This makes 2DCP a useful bridge between internal self-similarity of compact spaces and quotient structure of \(C_{\ptop}(X)\).  It is strong enough to force a positive quotient theorem, but weak enough to hold in many non-homogeneous situations.  The difficult cases occur near Efimov-type compacta.  If a compact space contains a copy of the Cantor set or a copy of \(\beta\omega\), then it has 2DCP, whereas scattered compacta fail 2DCP \cite{KKS2026}.  Thus the genuinely delicate examples are compacta with no non-trivial convergent sequences and no copy of \(\beta\omega\).  This part of the landscape also includes the Sacks-model constructions of Brech and the related preservation theorem of Sobota and Zdomskyy for the Nikodym property in side-by-side Sacks extensions; the recent 2DCP work emphasises that the corresponding Brech and Sobota--Zdomskyy compacta of Efimov type have 2DCP \cite{Brech2006,SobotaZdomskyy2017,KKS2026}.  As complementary information in the same circle of ideas, Manev, Sobota, and Zdomskyy recently constructed, consistently with \(\omega_1<\mathfrak c\), a compact space \(K\) such that \(C(K)\) is Grothendieck, has density \(\omega_1\), and admits no equivalent norm which is strictly convex or sequentially Kadets--Klee \cite{ManevSobotaZdomskyy2026}.

A related elementary test question asks whether there is a perfect compact space with 2DCP which is not locally homogeneous and still contains neither of the two standard self-similar compacta \(2^\omega\) and \(\beta\omega\).  Proposition~\ref{prop:perfect-2dcp-not-local} gives a positive ZFC answer.  This shows that 2DCP itself is not a disguised homogeneity condition; local homogeneity is only one of several sufficient mechanisms for 2DCP.

Talagrand's celebrated CH construction \cite{Talagrand1980} lies exactly in this region.  It produces a compact space \(K\) such that \(C(K)\) has the Grothendieck property, while the weak-star compact unit ball of \(C(K)^*\) contains no copy of \(\beta\omega\).  For background on Grothendieck Banach spaces we refer to the survey of Gonz\'alez and the first-named author \cite{GonzalezKania2021}.  In particular, Talagrand's compactum contains neither a non-trivial convergent sequence nor a copy of \(\beta\omega\).  The second-named author, Kurka, and \'{S}liwa therefore ask explicitly whether Talagrand's compactum has 2DCP \cite[Problem 25]{KKS2026}.  The result below gives a negative answer in a relative-consistency sense: under Jensen's prediction principle, Talagrand's construction may be realised so as to fail 2DCP.  This gives a negative answer to the Talagrand-compactum version of Problem~25 for a diamond-guided realisation of Talagrand's construction; what is not claimed is that every possible realisation has the same behaviour.

There is an important terminological caveat.  Talagrand's compactum is produced by a transfinite construction scheme, not by a canonical unique object.  The construction depends on enumerations of disjoint clopen sequences, enumerations of signed measure sequences, a scheduling map, and the choices made at successor stages.  The theorem below should therefore be read as a statement about a carefully chosen realisation of Talagrand's scheme.  For a stationary set \(S\subseteq\omega_1\), \(\diamondsuit(S)\) denotes the usual Jensen principle: there are sets \(D_\alpha\subseteq\alpha\), \(\alpha\in S\), such that for every \(A\subseteq\omega_1\) the set of \(\alpha\in S\) with \(D_\alpha=A\cap\alpha\) is stationary.  For set-theoretic background on Jensen's diamond principle and its relatives, see Jensen's original work and Rinot's survey \cite{Jensen1972,Rinot2011}.

\begin{maintheorem}\label{thm:A}
Assume \(\diamondsuit(S)\) for some stationary co-stationary set \(S\subseteq\omega_1\).  There is a realisation of Talagrand's construction, with final compactum \(T\), such that no two disjoint non-metrisable closed subspaces of \(T\) are homeomorphic.  Consequently \(T\) does not have 2DCP.
\end{maintheorem}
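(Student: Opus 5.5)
Our plan is to run Talagrand's inverse-limit construction of length \(\omega_1\), interleaving a Jensen-type diagonalisation that kills every potential homeomorphism between disjoint non-metrisable closed pieces. We realise \(T=\varprojlim_{\alpha<\omega_1}(K_\alpha,\pi^\beta_\alpha)\), where each \(K_\alpha\) is a metrisable zero-dimensional compactum (a closed subset of \(2^\omega\)) and the bonding maps are continuous surjections. At limit stages we take inverse limits, and at successor stages we make Talagrand's extensions. These extensions split a clopen set to handle a disjoint clopen sequence or a weak-star null sequence of measures, as scheduled by the bookkeeping. The Grothendieck property, the absence of \(\beta\omega\) in \(M_1(T)\), and the absence of convergent sequences are arranged only at stages in \(\omega_1\setminus S\); the scheduling map enumerates all tasks cofinally within this set, which is possible because it is stationary. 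Talagrand's verification then applies unchanged, since it only uses that each task is handled at some sufficiently late stage.

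The stages \(\alpha\in S\) are reserved for diagonalisation. Under CH, which follows from \(\diamondsuit(S)\), we code the following triples by subsets of \(\omega_1\):
\[
(F,G,h),\qquad F,G\subseteq T \text{ closed and disjoint},\quad h\colon F\to G \text{ a homeomorphism}.
\]
The coding goes through their traces on \(K_\alpha\): closed sets \(F_\alpha,G_\alpha\subseteq K_\alpha\), together with the relation induced on clopen algebras. Since \(\clop(T)=\bigcup_\alpha\clop(K_\alpha)\) has size \(\omega_1\), the sets \(D_\alpha\) predict the restriction of such a triple to \(\clop(K_\alpha)\). A closing-off club argument then gives a stationary set of \(\alpha\in S\) at which \(h\) is \(\alpha\)-determined. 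At these stages \(F\) and \(G\) are determined by their projections \(F_\alpha,G_\alpha\), and \(h\) induces a homeomorphism \(h_\alpha\colon F_\alpha\to G_\alpha\) commuting with \(\pi_\alpha\).

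At such a guessed stage we build \(K_{\alpha+1}\) so as to destroy the compatibility. Disjointness lets us choose clopen sets \(U\supseteq F_\alpha\) and \(V\supseteq G_\alpha\) with \(U\cap V=\emptyset\). Non-metrisability of \(F\) means that \(F\) is split at cofinally many later stages. We split a point \(x\in F_\alpha\) into two points, or into a Cantor fibre, while leaving the fibre over \(h_\alpha(x)\in V\) trivial. We then ensure, through a ``rigidity promise'' carried into all later stages, that fibres over \(F_\alpha\) and over \(G_\alpha\) are forever split in incompatible patterns. One way to enforce this is to promise that later splittings never occur over \(V\)-points lying in the same \(\pi\)-class pattern as those over \(U\). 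Consequently the final triple cannot satisfy \(\pi_\alpha\circ h=h_\alpha\circ\pi_\alpha\), which contradicts the fact that the triple was correctly guessed at \(\alpha\), since it continues to be guessed at stationarily many \(\alpha\).

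The main obstacle is making these rigidity promises coexist with Talagrand's tasks at later non-\(S\) stages. Those tasks require freely splitting clopen sets that may meet both \(U\) and \(V\), and the countably many promises made by stage \(\beta\) must all survive. Our first approach would be to make each promise a countable, preserved invariant of the following kind: a fixed pair of points in \(F_\alpha\) whose fibres are required to remain distinguishable by some measure-theoretic or cardinality feature that \(h\) must preserve, while their images under \(h_\alpha\) are not. We would then check that Talagrand's successor operation can always be carried out in a way that respects finitely or countably many such invariants. Finally, failure of 2DCP follows from the first assertion. Since \(T\) has no non-trivial convergent sequences, any infinite closed subset of \(T\) contains a copy of \(\beta\omega\) or is non-metrisable. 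Hence the compacta \(K_n\) in a 2DCP sequence are non-metrisable, because a metrisable infinite \(K_n\) in \(T\) would contain a convergent sequence, and a finite chain of such \(K_n\) is impossible. Therefore \(K_0\) would contain two disjoint homeomorphic copies of \(K_1\), which contradicts the first assertion.
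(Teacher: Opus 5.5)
Your reduction of the 2DCP statement to the rigidity statement, and your use of $\diamondsuit(S)$ to guess the reflected triple $(F_\alpha,G_\alpha,H_\alpha)$ on a club, match the paper. The gap is the step that actually kills $H$. You flag it yourself as the main obstacle and leave it open, and the mechanisms you suggest for it cannot work.

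\emph{Point-splitting misses $F$.} The diamond predicts only the traces, and, contrary to your remark, $F$ is not determined by $F_\alpha$; only $H$ factors through $\pi_\alpha$. Splitting the fibre of $T$ over a point $x\in F_\alpha$ therefore says nothing about $F\cap\pi_\alpha^{-1}(x)$, which may lie entirely on one side of the split. The obstruction must be insensitive to which lifts happen to lie in $F$.

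\emph{Unsplit-fibre promises are impossible.} A promise that fibres over $h_\alpha(x)$, or over certain $V$-points, are never split conflicts with Talagrand's conclusion. The same goes for any fibre-cardinality invariant. Every fibre $\pi_\alpha^{-1}(z)$ is a closed $G_\delta$ subset of the perfect compactum $T$. If it were finite or metrisable, $T$ would contain a point of countable character or an infinite metrisable closed set, hence a non-trivial convergent sequence. This is the one-step fibre-cardinality diagonalisation that the paper says is insufficient: a final homeomorphism can defer the matching information to later coordinates.

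\emph{The induction conditions are unchecked.} You do not verify that your diagonal successor step preserves $(A)$--$(G)$ for all previously introduced pairs. Scheduling Talagrand's tasks on $\omega_1\setminus S$ alone does not let his verification apply.

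The missing idea is to write the obstruction into Talagrand's own bookkeeping, so that no new promise has to be maintained. The paper takes distinct points $x_n\to x$ in $F_\alpha$ with $y_n=H_\alpha(x_n)$, separated by disjoint clopen sets $P_n\ni x_n$ and $Q_n\ni y_n$. It schedules the target sequence $\delta_{y_n}$ at stage $\alpha$ with $W_n=Q_n$. Talagrand's parity split then places $y_{r_q}$ in a clopen piece $H_q$, off the boundary $Y_\alpha\cap Z_\alpha$. Consequently every lift of $y_{r_q}$ has $\alpha$-th coordinate $1$ exactly when $q$ is even (Lemma~\ref{lem:parity}). Simultaneously it registers $\delta_{x_n}$ as a new source sequence with $J(\alpha,\alpha,\alpha+1)=\{r_q\}$, and $N_0(\alpha,\alpha),N_1(\alpha,\alpha)$ its even and odd parts (Lemma~\ref{lem:paired-successor-step}).

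From then on the ordinary induction maintains $(C)$, $(D)$ and $(F)$ for this pair. Hence, for every clopen $A\subseteq T$ and every choice of lifts $\widehat x_n$ of $x_n$, the sequence $\indicator_A(\widehat x_n)$ converges along a filter meeting both $N_0(\alpha,\alpha)$ and $N_1(\alpha,\alpha)$ infinitely (Lemma~\ref{lem:source-filter}). Take the lifts in $F$, and let $A$ be a clopen extension of $H^{-1}(\{t\in T:t(\alpha)=1\}\cap G)$. This gives a sequence equal to $1$ on $N_0$ and $0$ on $N_1$, a contradiction (Proposition~\ref{prop:persistent}). The target points avoid the boundary and $(F)$ quantifies over all lifts. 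So this argument is insensitive both to the unknown fibres of $F$ and $G$ and to everything done at later stages, which is exactly what your rigidity promise was meant to achieve.
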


The construction is still an honest Talagrand construction.  The prediction stages do not replace Talagrand's signed-measure splitting; they are inserted into the bookkeeping in a way which preserves the induction conditions \((A)\)--\((G)\).  The compatibility checks in Section~\ref{sec:compatibility} make explicit why Talagrand's theorem still applies.

\begin{maintheorem}\label{thm:B}
For \(T\) as in Theorem~\ref{thm:A}, the Banach space \(C(T)\) has the Grothendieck property and the weak-star compact unit ball of \(C(T)^*\) contains no copy of \(\beta\omega\).  In particular, \(T\) contains neither a non-trivial convergent sequence nor a copy of \(\beta\omega\).
\end{maintheorem}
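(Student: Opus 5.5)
Theorem~\ref{thm:B} is a transfer result: once the construction of Theorem~\ref{thm:A} is checked to be a genuine Talagrand construction, the conclusions follow from Talagrand's theorem \cite{Talagrand1980}. So I would not reprove the Grothendieck property from scratch. I would isolate the exact hypotheses Talagrand's argument uses and verify them for \(T\).

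\textbf{Step 1: abstract form of Talagrand's argument.} \(T\) is the Stone space of a Boolean algebra \(\mathfrak A=\bigcup_{\alpha<\omega_1}\mathfrak A_\alpha\). Here each \(\mathfrak A_\alpha\) is countable, and \(T\) is an inverse limit of metrisable compacta \(T_\alpha\). Talagrand's argument only needs two properties of this chain.
\begin{enumerate}[label=(\roman*)]
\item \emph{Handling of clopen sequences.} Under CH (which follows from \(\diamondsuit(S)\)), enumerate all disjoint sequences of clopen sets and all bounded sequences of signed measures. Every such sequence that appears at stage \(\alpha\) must be handled at some later successor stage according to the scheduling map.
\item \emph{Preservation.} At each stage the induction conditions \((A)\)--\((G)\) hold. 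In particular the signed-measure splitting preserves the relevant norm estimates for measures on the countable algebras already built.
\end{enumerate}
Given (i) and (ii), Talagrand's proof goes through verbatim. It shows that every weak-star null sequence in \(C(T)^*\) is weakly null, i.e.\ the Grothendieck property, and that \(M_1(T)\) contains no copy of \(\beta\omega\).

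\textbf{Step 2: compatibility with the diamond stages.} Here I would invoke the compatibility checks of Section~\ref{sec:compatibility}. The prediction stages for \(\alpha\in S\) are inserted as extra successor steps. The key points to confirm are these.
\begin{itemize}
\item They only add countably many clopen sets, so the algebras stay countable.
\item They do not destroy \((A)\)--\((G)\).
\item The scheduling map still visits every task cofinally. This uses co-stationarity of \(S\), or simply that the Talagrand tasks are placed at stages outside \(S\), or interleaved at \(\alpha+1\).
\end{itemize}
This is the main obstacle. A diagonalisation step that kills a potential homeomorphism must not undo a previously secured measure estimate. I would first try to make each prediction step a refinement that splits atoms using sets chosen inside a single Talagrand ``free'' coordinate, so that the estimates in \((A)\)--\((G)\), which are strict inequalities over countably many measures, persist under small enough modifications.

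\textbf{Step 3: the ``in particular'' clause.} This part is standard.
\begin{itemize}
\item \emph{No convergent sequences.} If \(t_n\to t\) is non-trivial in \(T\), then \(\delta_{t_n}-\delta_t\) is weak-star null in \(C(T)^*\) but not weakly null. To see this, pick a closed discrete-in-\(T\setminus\{t\}\) subsequence and a Borel set containing infinitely many \(t_n\) and not \(t\), pairing it with an element of \(C(T)^{**}\). This contradicts the Grothendieck property.
\item \emph{No copy of \(\beta\omega\).} If \(\beta\omega\hookrightarrow T\), then \(t\mapsto\delta_t\) embeds \(\beta\omega\) into \(M_1(T)\), since it is a homeomorphism onto its image. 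This contradicts the second conclusion.
\end{itemize}
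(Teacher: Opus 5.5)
\textbf{Verdict: genuine gap.} Your overall reduction is the one the paper uses. You check that the diamond-guided recursion is still a realisation of Talagrand's scheme: conditions \((A)\)--\((G)\) at every stage, exhaustive enumerations, and every admissible pair scheduled cofinally often. Then you quote Talagrand's theorem. Your Step~3 derivation of the ``in particular'' clause is correct. The gap is in Step~2, which you rightly flag as the real content but propose to settle by a mechanism that would fail.

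\textbf{Why the perturbation idea fails.} Conditions \((A)\)--\((G)\) are not strict inequalities over countably many measures that survive small modifications.
\begin{itemize}
\item \((A)\)--\((D)\) are almost-inclusions between index sets.
\item \((E)\) is a fibre condition.
\item \((F)\) requires weak-star limits along \(J(\theta,\delta,\alpha)\) for \emph{every} choice of norm-one lifts in \(B_n(\theta,\delta,\alpha)\).
\item \((G)\) requires the exact equality \(\lambda_n(Z_{\alpha'}\times\{1\})=0\) on an infinite set.
\end{itemize}
Nor are these conditions secured once and for all. Each new coordinate changes every \(B_n(\theta,\delta,\alpha+1)\), so \((E)\) and \((F)\) for all old pairs must be re-established at \(\alpha+1\). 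That is exactly what Talagrand's finite requirements \((b)\), \((d)\), \((e)\) do, through the choice of \(m_{i,q}\in J(\theta_i,\rho_q,\alpha)\cap N_{\varepsilon(q)}(\theta_i,\rho_q)\). A coordinate added by an extra refinement outside this machinery can split the supports \(V_n(\theta,\alpha)\), \(n\in J(\theta,\delta,\alpha)\), in an alternating way and so destroy \((F)\).

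\textbf{The missing idea.} A diamond stage must itself be an ordinary Talagrand successor step, replacing the scheduled one rather than supplementing it. At a valid guess \(\alpha\in S\), the paper sets \(\eta(\alpha)=(\alpha,0)\). It treats the admissible target sequence \(\nu_n(\alpha,0)=\delta_{h(x_n)}\) on \(U_n(\alpha,0)=P_n\cup Q_n\), with \(W_n=Q_n\). Requirements \((a)\)--\((h)\) are then met by Talagrand's own finite induction, and all old pairs are handled as usual.

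The only new object is the source pair \((\alpha,\alpha)\), with \(\nu_n(\alpha,\alpha)=\delta_{x_n}\), \(J(\alpha,\alpha,\alpha+1)=\{n_q:q\in Q^*\}\), and \(N_0,N_1\) its even and odd parts. For this pair:
\begin{itemize}
\item \((B)\) and \((C)\) are immediate.
\item \((E)\) holds because \(P_{n_q}\cup Q_{n_q}\) misses \(Y_\alpha\cap Z_\alpha\).
\item \((F)\) holds because \(Z_\alpha\subseteq Q\) while \(x,x_n\in P\). So every element of \(B_{n_q}(\alpha,\alpha,\alpha+1)\) equals \(\delta_{(x_{n_q},0)}\), and these converge to \(\delta_{(x,0)}\).
\end{itemize}
This is Lemma~\ref{lem:paired-successor-step} and Proposition~\ref{prop:construction-valid}.

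The ordinary task displaced at \(\alpha\) is recovered because \(\sigma\) schedules every pair cofinally often on \(\omega_1\setminus S\). This is the one point of Step~2 you do anticipate, and it needs only that \(\omega_1\setminus S\) is uncountable.
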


\begin{corollary}\label{cor:not-local-homogeneous}
The compactum \(T\) of Theorem~\ref{thm:A} is not locally homogeneous.  It is not h-homogeneous either.
\end{corollary}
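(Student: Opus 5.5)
The plan is to derive both assertions directly from Theorem~\ref{thm:A}, with Theorem~\ref{thm:B} supplying the fact that every non-empty open subset of \(T\) is large.

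\textbf{Step 1: local non-metrisability.} I will show that every non-empty open set \(U\subseteq T\) contains a non-empty clopen set \(V\) that is non-metrisable. Talagrand's compactum is zero-dimensional, being an inverse limit of Stone spaces of countable Boolean algebras, so such \(U\) contains a non-empty clopen \(V\). If \(V\) were metrisable, it would be an infinite compact metric space, because \(T\) has no isolated points by Talagrand's construction. Such a space contains a non-trivial convergent sequence. This contradicts Theorem~\ref{thm:B}. Hence every non-empty clopen subset of \(T\) is non-metrisable.

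\textbf{Step 2: failure of local homogeneity.} Recall that \(T\) is locally homogeneous if for every two points \(x,y\) there are neighbourhoods \(U\ni x\), \(W\ni y\) and a homeomorphism \(h\colon U\to W\) with \(h(x)=y\). Pick distinct points \(x,y\in T\) (\(T\) is infinite) and suppose such \(U,W\) and \(h\) exist.

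1. By shrinking \(U\) and \(W\), I may assume they are disjoint. Replace \(U\) by a clopen neighbourhood \(U'\) of \(x\) with \(U'\subseteq U\), \(U'\cap W=\emptyset\) and \(y\notin U'\), chosen so that \(h(U')\) is also disjoint from \(U'\).
2. This is possible: take a clopen \(C\ni x\) with \(y\notin C\), and set \(U'=C\cap h^{-1}(T\setminus C)\cap(\text{a clopen inside } U)\). Each piece is a neighbourhood of \(x\), since \(h(x)=y\notin C\). Intersecting with a clopen neighbourhood keeps compactness.
3. \(U'\) is compact, so \(h(U')\) is a closed subset of \(T\) homeomorphic to \(U'\), and it is disjoint from \(U'\).
4. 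By Step 1, \(U'\) contains a non-empty clopen, hence non-metrisable, subset. So \(U'\) is non-metrisable. We now have two disjoint homeomorphic non-metrisable closed subspaces, contradicting Theorem~\ref{thm:A}.

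In fact the argument gives more: no non-empty clopen set has a homeomorphic copy disjoint from it.

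\textbf{Step 3: failure of h-homogeneity.} Recall that \(T\) is h-homogeneous if every non-empty clopen subset of \(T\) is homeomorphic to \(T\). Take a clopen partition \(T=A\sqcup B\) with \(A,B\neq\emptyset\), which exists because \(T\) is zero-dimensional and has more than one point. If \(T\) were h-homogeneous, then \(A\cong T\cong B\). Then \(A\) and \(B\) are disjoint homeomorphic closed subspaces, and both are non-metrisable by Step 1. This again contradicts Theorem~\ref{thm:A}.

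\textbf{Main obstacle.} The only delicate point is Step 1: confirming that \(T\) is zero-dimensional and perfect, which is inherited from the construction. If perfectness needs justification, note that an isolated point \(p\) would give a convergent-sequence-free but non-Grothendieck situation. More simply, by Talagrand's condition that each basic clopen set is split, every non-empty clopen set is infinite.
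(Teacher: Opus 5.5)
Your argument is correct, but for local homogeneity it takes a different route from the paper.

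\textbf{How the two routes differ.} The paper uses only the weak half of Theorem~\ref{thm:A}, namely the failure of 2DCP. For local homogeneity it imports the external result that every infinite compact locally homogeneous space has 2DCP. For h-homogeneity it notes that a clopen partition \(T=A\sqcup B\) with \(A\cong T\cong B\) makes the constant sequence \(K_n=T\) a 2DCP witness.

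You use the rigidity half of Theorem~\ref{thm:A} directly. The set \(U'=C\cap D\cap h^{-1}(T\setminus C)\) is clopen, \(h[U']\) is a disjoint homeomorphic closed copy of it, and Theorem~\ref{thm:B} makes both non-metrisable.

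\textbf{What each route buys.} Your route is self-contained and proves more. Any homeomorphism between open subsets of \(T\) must fix every non-isolated point, so \(T\) is even rigid; the 2DCP route does not reveal this. The price is the extra input that the clopen sets involved are non-metrisable. The paper's argument, by contrast, applies verbatim to any infinite compactum failing 2DCP. In Step~3 the two proofs coincide up to which half of Theorem~\ref{thm:A} is invoked.

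\textbf{A small slip in Step~2.} In item~1, the condition \(U'\cap W=\varnothing\) is not needed, and it fails when \(x\in W\) unless \(W\) is shrunk as well. What item~2 delivers, and what you actually use, is \(U'\cap h[U']=\varnothing\).

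\textbf{A correction to Step~1.} Your fallback justification of perfectness via the Grothendieck property is wrong: \(C(K\sqcup\{p\})\cong C(K)\oplus\mathbb R\) is Grothendieck whenever \(C(K)\) is. Also, none of the conditions \((A)\)--\((G)\) says that basic clopen sets are split.

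Perfectness does hold:
\begin{itemize}
\item it holds for \(K_\omega=2^\omega\);
\item it survives successor steps because \(Y_\alpha\) and \(Z_\alpha\) are closures of open sets;
\item it survives limits because a clopen singleton would reflect to an earlier stage.
\end{itemize}

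However, you do not need perfectness at all. To refute local homogeneity it suffices to take \(x\) non-isolated. Then every clopen neighbourhood of \(x\) is infinite, and hence non-metrisable by Theorem~\ref{thm:B}. In Step~3, \(A\cong T\) is non-metrisable outright.
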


\begin{proof}
The second-named author, Kurka, and \'{S}liwa prove that every infinite compact locally homogeneous space has 2DCP \cite[Theorem 6]{KKS2026}.  Thus Theorem~\ref{thm:A} excludes local homogeneity.  Since \(T\) is zero-dimensional and infinite, h-homogeneity would give two disjoint non-empty clopen subspaces homeomorphic to \(T\), and hence 2DCP; again this is impossible by Theorem~\ref{thm:A}.
\end{proof}

\begin{corollary}\label{cor:efimov-consistency}
Assuming \(\diamondsuit(S)\) for some stationary co-stationary \(S\subseteq\omega_1\), there is an Efimov compactum which fails 2DCP and is not locally homogeneous.  Consequently
\[
        \operatorname{Con}(\mathrm{ZFC})
        \Longrightarrow
        \operatorname{Con}\bigl(\mathrm{ZFC}+\text{``some Talagrand compactum fails 2DCP''}\bigr).
\]
\end{corollary}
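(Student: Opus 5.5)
The plan is to combine Theorems~\ref{thm:A} and~\ref{thm:B} with Corollary~\ref{cor:not-local-homogeneous}, and then pass to relative consistency through the known consistency of diamond principles.

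First, fix a stationary co-stationary \(S\subseteq\omega_1\) with \(\diamondsuit(S)\), and let \(T\) be the compactum of Theorem~\ref{thm:A}. By Theorem~\ref{thm:B}, \(T\) contains neither a non-trivial convergent sequence nor a copy of \(\beta\omega\). Since \(T\) is an infinite compact space, this makes it an Efimov compactum. Theorem~\ref{thm:A} says \(T\) fails 2DCP, and Corollary~\ref{cor:not-local-homogeneous} says \(T\) is not locally homogeneous. Infiniteness is immediate: \(T\) is the inverse limit of a transfinite system of length \(\omega_1\) of non-trivial extensions, and it is in fact non-metrisable; alternatively, a finite space would make \(C(T)\) finite-dimensional, which is incompatible with the Grothendieck construction. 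Moreover, \(T\) is by construction a realisation of Talagrand's scheme, so it is a Talagrand compactum. This gives the first sentence.

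For the consistency statement, the plan is to use two standard facts.

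1. Jensen's theorem: if \(\mathrm{ZFC}\) is consistent, then so is \(\mathrm{ZFC}+V=L\).
2. In \(L\), \(\diamondsuit(S)\) holds for every stationary \(S\subseteq\omega_1\) \cite{Jensen1972}.

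In ZFC, \(\omega_1\) splits into two disjoint stationary sets. This follows from Ulam or Solovay splitting; concretely, one can take the ordinals of cofinality \(\omega\) split by a ladder argument, or simply invoke Solovay's theorem. Either piece then gives a stationary co-stationary set. Hence \(\mathrm{ZFC}+V=L\) proves the hypothesis of Theorem~\ref{thm:A}, and therefore proves that some Talagrand compactum fails 2DCP. Relativising to \(L\) yields the implication
\[
\operatorname{Con}(\mathrm{ZFC})\Longrightarrow\operatorname{Con}\bigl(\mathrm{ZFC}+\text{``some Talagrand compactum fails 2DCP''}\bigr).
\]

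The main point needing care is not mathematical depth but bookkeeping. One must make sure that the phrase ``Talagrand compactum'' is interpreted as ``a compactum arising from some realisation of Talagrand's construction scheme'', as stipulated in the terminological caveat. One must also check that the CH used by Talagrand's scheme is available; it is, since \(\diamondsuit(S)\) implies CH (and in any case \(V=L\) implies GCH).
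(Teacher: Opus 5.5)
Your proposal is correct and follows the paper's proof: Theorems~\ref{thm:A} and~\ref{thm:B} with Corollary~\ref{cor:not-local-homogeneous} give the Efimov example, and the consistency statement comes from \(V=L\) yielding \(\diamondsuit(S)\) for a stationary co-stationary \(S\), whose existence in ZFC you rightly make explicit (but \(\operatorname{Con}(\mathrm{ZFC})\Rightarrow\operatorname{Con}(\mathrm{ZFC}+V=L)\) is G\"odel's theorem, not Jensen's). Drop your ``alternative'' infiniteness argument, since finite-dimensional Banach spaces are trivially Grothendieck; the surjection \(T\onto K_\omega=2^\omega\) already shows that \(T\) is infinite.
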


\begin{proof}
By Theorem~\ref{thm:B}, the compactum \(T\) of Theorem~\ref{thm:A} has no non-trivial convergent sequence and contains no copy of \(\beta\omega\); hence it is an Efimov compactum.  Theorem~\ref{thm:A} gives failure of 2DCP, and Corollary~\ref{cor:not-local-homogeneous} gives failure of local homogeneity.  Jensen's theorem that \(V=L\) implies \(\diamondsuit(S)\) for every stationary \(S\subseteq\omega_1\) gives the displayed relative-consistency statement; see, for instance, \cite[Theorem~1.2]{Rinot2011}.
\end{proof}

\begin{proposition}\label{prop:perfect-2dcp-not-local}
There is, in ZFC, a perfect compact zero-dimensional space \(X\) with 2DCP which is not locally homogeneous and which contains no copy of \(\beta\omega\) and no copy of \(2^\omega\).
\end{proposition}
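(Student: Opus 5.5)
The plan is to build \(X\) as a topological sum of two first-countable ``non-metrisable order-type'' compacta whose points have incompatible local properties. Let \(D\) be the Alexandroff double arrow space, \(D=[0,1]\times\{0,1\}\) minus the two endpoints \((0,0)\) and \((1,1)\), with the lexicographic order topology. Put
\[
X=D\oplus (D\times D).
\]
The first step is to record the standard properties of \(D\). It is compact, zero-dimensional (intervals \([(a,1),(b,0)]\) are clopen), first countable, hereditarily separable and hereditarily Lindel\"of, and has no isolated points. Moreover, every clopen interval \([(a,1),(b,0)]\) with \(a<b\) is order-isomorphic to \(D\) via the affine map \([a,b]\to[0,1]\), and hence homeomorphic to \(D\).

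The second step verifies the properties required in Proposition~\ref{prop:perfect-2dcp-not-local}.
\begin{enumerate}[label=(\roman*)]
\item \(X\) is compact, zero-dimensional and perfect, since both summands are.
\item \(X\) is first countable, so it contains no copy of \(\beta\omega\): no point of \(\beta\omega\setminus\omega\) has countable character.
\item \(X\) contains no copy of \(2^\omega\). Every compact metrisable subspace \(M\subseteq D\) is countable, since \(M\) is second countable and the projection \(D\to[0,1]\) is at most two-to-one. For \(M\subseteq D\times D\), both coordinate projections of \(M\) are compact metrisable subspaces of \(D\), hence countable, so \(M\) is countable.
\item \(X\) has 2DCP: take \(K_n=D\) for all \(n\). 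By the first step, \(D\) contains the disjoint clopen copies \([(0,1),(\tfrac13,0)]\) and \([(\tfrac23,1),(1,0)]\) of itself.
\end{enumerate}

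The third step, which I expect to be the main point, is the failure of local homogeneity. Whatever precise form of the definition is used in \cite{KKS2026}, it entails that any two points have homeomorphic open neighbourhoods. So it suffices to exhibit a local topological invariant separating a point \(x\in D\) from a point \(y\in D\times D\). Every neighbourhood of \(x\) is hereditarily Lindel\"of. Every neighbourhood of \(y\) contains a clopen box \(U\times V\) with \(U,V\) clopen intervals, and \(U\times V\cong D\times D\) by the first step. Inside \(D\times D\) the antidiagonal \(\{((t,0),(1-t,1)) : 0<t<1\}\) is uncountable and discrete, as in the Sorgenfrey-plane argument. Hence no neighbourhood of \(y\) is hereditarily Lindel\"of, and no neighbourhood of \(x\) is homeomorphic to a neighbourhood of \(y\). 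Checking the discreteness of this antidiagonal carefully in the lexicographic topology is the one routine computation I would write out. If the paper's notion of local homogeneity is the stronger one (a homeomorphism of \(X\) moving \(x\) to \(y\)), the same invariant applies all the more.
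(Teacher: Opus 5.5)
Your route differs from the paper's and can be completed. However, the computation you flagged as routine fails as written, and the reduction that precedes it is vacuous.

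\textbf{The antidiagonal is not discrete.} With your convention \((t,0)<(t,1)\), a basic neighbourhood of \((t,0)\) is \([(t-\varepsilon,1),(t,0)]\), and a basic neighbourhood of \((1-t,1)\) is \([(1-t,1),(1-t+\varepsilon,0)]\). Their product contains \(((u,0),(1-u,1))\) for every \(u\in(t-\varepsilon,t]\). In fact your set is the graph of the restriction to \(\{(s,0):0<s<1\}\) of the order-reversing self-homeomorphism \((s,i)\mapsto(1-s,1-i)\) of \(D\), so it is a copy of the Sorgenfrey line. The reflection \(t\mapsto 1-t\) already reverses the one-sided topology, and switching the second index reverses it back. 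Use instead \(\{((t,0),(t,1)):0<t<1\}\), or \(\{((t,1),(1-t,1)):0<t<1\}\). A basic neighbourhood of \(((t,0),(t,1))\) contains \(((u,0),(u,1))\) only if \(u\leqslant t\leqslant u\), so this set is discrete.

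\textbf{The reduction.} The statement ``any two points have homeomorphic open neighbourhoods'' holds in every space: take \(U=V=X\). Likewise ``every neighbourhood of \(x\) is hereditarily Lindel\"of'' is false, since \(X\) itself is such a neighbourhood. You need the clause \(\varphi(x)=y\) that the paper uses. If \(\varphi:U\to V\) is a homeomorphism between open sets with \(\varphi(x)=y\), then \(\varphi[U\cap D]\) is an open, hereditarily Lindel\"of neighbourhood of \(y\). On the other hand, every neighbourhood of \(y\) contains a clopen box \(U'\times V'\cong D\times D\), hence, by the corrected computation, an uncountable discrete set. This is exactly the shrinking into summands that the paper performs.

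Two smaller points:
\begin{enumerate}[label=(\alph*)]
\item In (iii), the at-most-two-to-one projection does not by itself give countability. The actual argument is the Sorgenfrey one: fix a countable base \(\mathcal B\) of \(M\). Each \((a,1)\in M\) is the least element of some member of \(\mathcal B\), and each \((a,0)\in M\) is the greatest element of some member. This injects \(M\) into \(\mathcal B\times\{0,1\}\).
\item Your \(X\) is perfect only in the sense of having no isolated points. A perfectly normal compactum is hereditarily Lindel\"of, and \(D\times D\) is not.
\end{enumerate}

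\textbf{Comparison.} The paper takes \(X=D\sqcup L\), where \(L\) is the perfect compactum without 2DCP from \cite[Theorem~7]{KKS2026}. It uses 2DCP itself as the local invariant: a clopen neighbourhood of a point of \(D\) is a copy of \(D\), so a local homeomorphism into \(L\) would give \(L\) 2DCP. The absence of \(2^\omega\) and \(\beta\omega\) in \(L\) is also read off from the failure of 2DCP.

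Your version replaces \(L\) by \(D\times D\) and replaces 2DCP by hereditary Lindel\"ofness. It is self-contained and avoids the nontrivial external construction. It also makes the point that 2DCP is not a homogeneity condition more sharply, since in your \(X\) every point has arbitrarily small clopen neighbourhoods with 2DCP. The paper's version, in turn, keeps the obstruction to local homogeneity tied to the 2DCP property under study.
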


\begin{proof}
Let \(D\) be the endpoint-deleted double-arrow compactum used in \cite[Example 26]{KKS2026}, \emph{i.e.}, \(D=([0,1]\times\{0,1\})\setminus\{(0,0),(1,1)\}\) with the order topology inherited from the lexicographic order.  It is compact, perfect, zero-dimensional, and h-homogeneous.  Hence it has 2DCP, and it contains neither \(\beta\omega\) nor \(2^\omega\).  Let \(L\) be the perfect zero-dimensional compact space without 2DCP constructed by the second-named author, Kurka, and \'{S}liwa \cite[Theorem 7]{KKS2026}.  Since any copy of \(2^\omega\) or of \(\beta\omega\) has 2DCP, \(L\) contains neither of them.  Put
\[
        X=D\sqcup L .
\]
Then \(X\) is perfect, compact, and zero-dimensional.  The property 2DCP is upward hereditary with respect to containing a subspace with 2DCP; hence \(X\) has 2DCP because the clopen summand \(D\) has 2DCP.

The space \(X\) contains no copy of \(2^\omega\).  Indeed, if \(C\subseteq X\) were such a copy, the clopen partition \(X=D\sqcup L\) would split \(C\) into two clopen pieces; one non-empty piece would again contain a copy of \(2^\omega\) and would lie in a single summand.  The same argument applies to \(\beta\omega\): a two-piece clopen partition of \(\beta\omega\) has an infinite clopen part \(\beta A\), hence a copy of \(\beta\omega\).  This is impossible because neither \(D\) nor \(L\) contains the relevant compactum.

It remains to see that \(X\) is not locally homogeneous.  Choose \(d\in D\) and \(\ell\in L\).  If local homogeneity gave a homeomorphism \(\varphi:U\to V\) between open neighbourhoods with \(\varphi(d)=\ell\), then, since \(D\) and \(L\) are clopen, we could shrink \(U\) so that \(U\subseteq D\) and \(\varphi[U]\subseteq L\).  Choose a non-empty clopen set \(C\subseteq D\) with \(d\in C\subseteq U\).  By h-homogeneity of \(D\), \(C\cong D\), so \(C\) has 2DCP.  Its image \(\varphi[C]\) is a compact subspace of \(L\) with 2DCP; by the same upward heredity, \(L\) itself has 2DCP, contradicting the choice of \(L\).
\end{proof}

The example above answers the natural homogeneity variant of the 2DCP question, but it is deliberately elementary.  The Talagrand construction considered in the rest of the paper is different in spirit: it remains in the Efimov region and is meant to serve as a test compactum for the pointwise quotient problem.  A secondary aim of the exposition is to isolate the small number of mechanisms in Talagrand's paper that are needed here, in the hope that the construction becomes more accessible to readers approaching it from \(C_{\ptop}\)-theory.

The pointwise quotient part of the paper contains a theorem which is not Talagrand-specific, but is specific to spaces of continuous functions.  If \(Y\subseteq\mathbb R^\omega\) is a Banach sequence space, we write \(Y_{\ptop}\) for \(Y\) endowed with the topology inherited from \(\mathbb R^\omega\).  Throughout, a quotient map between locally convex spaces means a continuous open linear surjection onto the target; saying that a space has a quotient isomorphic to \(F\) means that such a quotient is linearly homeomorphic to \(F\).  The theorem below in fact assumes only a continuous linear surjection.

\begin{maintheorem}\label{thm:cp-lp-quotient}
Let \(X\) be a Tychonoff space and let \(1\leqslant p<\infty\).  If there is a continuous linear surjection
\[
        Q:C_{\ptop}(X)\longrightarrow(\ell_p)_{\ptop},
\]
then \(C_{\ptop}(X)\) has the Josefson--Nissenzweig property.  Consequently, \(C_{\ptop}(X)\) has a quotient isomorphic to \((c_0)_{\ptop}\).
\end{maintheorem}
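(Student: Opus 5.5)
The plan is to read off finitely supported measures from the coordinates of \(Q\), pass to normalised disjoint blocks of them, and then invoke the known equivalence between the Josefson--Nissenzweig property of \(C_{\ptop}(X)\) and the existence of a \((c_0)_{\ptop}\)-quotient. Recall that \(C_{\ptop}(X)\) has the Josefson--Nissenzweig property if there is a sequence \((\mu_k)\) of finitely supported signed measures on \(X\) with \(\|\mu_k\|=1\) and \(\mu_k(f)\to0\) for every \(f\in C(X)\).

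\emph{Step 1: measures from coordinates.} The topology of \((\ell_p)_{\ptop}\) is inherited from \(\mathbb R^\omega\), so each coordinate functional \(f\mapsto (Qf)_n\) is a continuous linear functional on \(C_{\ptop}(X)\). Hence it is a finitely supported signed measure
\[
\nu_n=\sum_{x\in F_n}a_{n,x}\delta_x .
\]
Since \(Qf\in\ell_p\), we have \(\nu_n(f)\to0\) for every \(f\). We also have \(\nu_n\neq0\) for all \(n\), by surjectivity. If \(\inf_n\|\nu_n\|>0\) along a subsequence, then \(\mu_n=\nu_n/\|\nu_n\|\) already witnesses the Josefson--Nissenzweig property on that subsequence.

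\emph{Step 2: normalised blocks.} In general I would use blocks instead of the \(\nu_n\) themselves. Let \(q\) be the conjugate exponent, so \(q=\infty\) when \(p=1\). Let \(c^{(k)}\) be unit vectors of \(\ell_q\) with finite, successive, pairwise disjoint supports \(B_k\), and put \(\lambda_k=\sum_{n\in B_k}c^{(k)}_n\nu_n\). For each \(f\) we have \(\lambda_k(f)=\langle c^{(k)},Qf\rangle\). By H\"older,
\[
|\lambda_k(f)|\leqslant\Bigl(\sum_{n\in B_k}|(Qf)_n|^p\Bigr)^{1/p}\longrightarrow0 ,
\]
and this works uniformly for \(1\leqslant p<\infty\). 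The task is therefore to choose the blocks so that \(\inf_k\|\lambda_k\|>0\); then \(\mu_k=\lambda_k/\|\lambda_k\|\) is the required sequence.

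\emph{Step 3: the main obstacle.} I must exclude the degenerate case
\[
\sigma_N:=\sup\Bigl\{\Bigl\|\sum_{n>N}c_n\nu_n\Bigr\|:\ c\ \text{finitely supported},\ \|c\|_q\leqslant1\Bigr\}\longrightarrow0 .
\]
This case has to be contradicted using surjectivity, together with the fact that \(f\) may be unbounded on the countable set \(S=\bigcup_nF_n\).

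The first idea I would try is a growth argument. Pass to \(N_k\) with \(\sigma_{N_k}<4^{-k}\). By surjectivity, the tail blocks require functions \(f_k\) with \(\langle c^{(k)},Qf_k\rangle=1\), so \(\sup_{F}|f_k|\geqslant4^k\) on the relevant supports. I would then use complete regularity to glue these into a single continuous \(f\), placing the large values on points of \(S\) that escape all earlier supports. This would produce \(Qf\notin\ell_p\), a contradiction.

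If the gluing fails, because the supports accumulate, the fallback is different. I would show that in that case \(Q\) factors through the restriction \(C_{\ptop}(X)\to C_{\ptop}(\overline S)\) to a set on which the uniform-norm estimate \(|\langle c,Qf\rangle|\leqslant\sigma_N\sup|f|\) forces the range of \(Q\) to consist of vectors with rapidly decaying tails. That contradicts surjectivity onto all of \(\ell_p\). I expect this case analysis to be the hardest part.

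\emph{Step 4: the quotient.} Once the Josefson--Nissenzweig property is established, I would invoke the theorem of Banakh, the second-named author and \'Sliwa that, for every Tychonoff \(X\), \(C_{\ptop}(X)\) has the Josefson--Nissenzweig property if and only if it has a quotient isomorphic to \((c_0)_{\ptop}\). Concretely, the map \(f\mapsto(\mu_k(f))_k\) is shown there to be a quotient map onto \((c_0)_{\ptop}\). This gives the second assertion of the theorem.
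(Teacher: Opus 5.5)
\textbf{There is a genuine gap in Step~3.} That step is the entire content of the theorem, since Steps~1, 2 and~4 are formal, and you give two strategies without carrying out either.

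The obstacle is also framed the wrong way round. You treat the possibility that some \(f\in C(X)\) is unbounded on \(S=\bigcup_n\operatorname{supp}\nu_n\) as something to exploit or to split into cases. The missing idea is that this never happens. Since \((\nu_n)\) is pointwise null, it is pointwise bounded on \(C(X)\). The union of the supports of a pointwise bounded family of finitely supported measures is functionally bounded; this is the Banakh--Gabriyelyan support lemma that the paper uses.

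Once \(M_f:=\sup_S|f|<\infty\) for every \(f\), the degenerate case collapses without blocks. Suppose \(\|\nu_{n_k}\|\leqslant 2^{-2k}\) along a subsequence. Take \(f\) with \(Qf=y\), where \(y_{n_k}=2^{-k}\) and \(y_n=0\) otherwise. Then \(2^{-k}=|\nu_{n_k}(f)|\leqslant 2^{-2k}M_f\), which is absurd for large \(k\). Hence \(\inf_n\|\nu_n\|>0\), Step~2 is unnecessary, and your normalisation in Step~1 already finishes the proof. Your fallback tail estimate \(|\langle c,Qf\rangle|\leqslant\sigma_N\sup_S|f|\) would also work, but only once functional boundedness is known. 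It does not follow from the supports ``accumulating'', and passing to \(\overline S\) does not by itself bound \(f\).

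\textbf{Your gluing idea is the right tool, but aimed at the wrong target.} It should contradict pointwise boundedness of \((\nu_n)\), not \(Qf\in\ell_p\) via blocks. In the block version, as you noticed, \(\lambda_k(f)\) depends on \(f\) at all earlier support points and is not controlled. Here is how it proves the support lemma.
\begin{enumerate}
\item Suppose \(g\in C(X)\) is unbounded on \(S\). Recursively pick \(s_k\in S\) and \(n_k\) with \(s_k\in\operatorname{supp}\nu_{n_k}\) and \(|g(s_k)|>2+\max\{|g(z)|:z\in\bigcup_{i<k}\operatorname{supp}\nu_{n_i}\}\). The \(n_k\) are then automatically distinct.
\item The open sets \(U_k=\{z:\bigl||g(z)|-|g(s_k)|\bigr|<1/2\}\) form a locally finite family, and \(U_j\) misses \(\operatorname{supp}\nu_{n_k}\) for \(k<j\).
\item Choose \(h_k\in C(X)\) vanishing off \(U_k\), with \(h_k(s_k)=1\) and \(h_k=0\) on \(\operatorname{supp}\nu_{n_k}\setminus\{s_k\}\).
\item For any scalars \(t_k\), the function \(h=\sum_k t_kh_k\) is continuous, and \(\nu_{n_k}(h)=\sum_{j<k}t_j\nu_{n_k}(h_j)+t_k\nu_{n_k}(\{s_k\})\) with \(\nu_{n_k}(\{s_k\})\neq0\).
\item So the \(t_k\) can be chosen recursively with \(|\nu_{n_k}(h)|\geqslant k\), contradicting \(\nu_n(h)\to0\).
\end{enumerate}

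\textbf{A minor point in Step~4.} For an arbitrary Josefson--Nissenzweig sequence, the map \(f\mapsto(\mu_k(f))_k\) need not be onto \(c_0\); for instance, repeat each \(\mu_k\) twice. Simply cite the equivalence instead of asserting this concrete map.
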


For Talagrand compacta this \(C_{\ptop}\)-specific theorem combines with a closed graph observation and Talagrand's original Banach-space conclusion.

\begin{corollary}\label{cor:talagrand-classical-quotients}
Let \(K\) be any Talagrand compactum satisfying Talagrand's conclusion.  Then \(C_{\ptop}(K)\) has no quotient isomorphic to \((c_0)_{\ptop}\), to \((\ell_\infty)_{\ptop}\), or to \((\ell_p)_{\ptop}\) for any \(1\leqslant p<\infty\).
\end{corollary}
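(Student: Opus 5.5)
We prove Corollary~\ref{cor:talagrand-classical-quotients} by reducing every case to the Josefson--Nissenzweig property and then contradicting Talagrand's Grothendieck conclusion. First we record what Talagrand's conclusion gives for \(K\). Since \(C(K)\) is Grothendieck, every weak-star null sequence in \(C(K)^*=M(K)\) is weakly null. Moreover, \(K\) contains no non-trivial convergent sequence.

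\textbf{Step 1: \(C_{\ptop}(K)\) fails the Josefson--Nissenzweig property.} Suppose \((\mu_n)\) is a sequence of finitely supported signed measures with \(\|\mu_n\|=1\) and \(\mu_n(f)\to0\) for every \(f\in C(K)\). Grothendieck gives \(\mu_n\to0\) weakly in \(M(K)\). Finitely supported measures are atomic, so we may test against bounded Borel functions. Pass to a subsequence, and use a gliding hump on the countable union of supports, to get sign functions \(g\) on disjoint finite sets. These give \(\langle\mu_{n_k},g\rangle\) bounded away from \(0\), which contradicts weak nullness in \(M(K)\). Concretely, weak nullness of \(\mu_n\) restricted to the countable set \(A=\bigcup\operatorname{supp}\mu_n\) means weak nullness in \(\ell_1(A)\). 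By the Schur property this forces \(\|\mu_n\|\to0\), a contradiction. This is the clean route.

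\textbf{Step 2: no \((c_0)_{\ptop}\) quotient.} A quotient map \(C_{\ptop}(K)\to(c_0)_{\ptop}\) yields functionals \(\mu_n=e_n^*\circ Q\). Each \(\mu_n\) is continuous on \(C_{\ptop}(K)\), so it is a finitely supported measure, and \(\mu_n(f)\to0\) for every \(f\). Openness of \(Q\) gives \(\mu_n\neq0\). The standard argument (as in Banakh--K\k{a}kol--\'Sliwa) is this: if \(\|\mu_n\|\) were bounded along a subsequence after normalising \(\mu_n/\|\mu_n\|\), we would get a JN sequence. We must therefore show \(\|\mu_n\|\not\to\infty\). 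Here lies the main obstacle. If \(\|\mu_n\|\to\infty\), then \(\mu_n/\|\mu_n\|\) is still weak-star null and has norm one, so it is again a JN sequence. In either case normalisation works, since \(\mu_n(f)\to0\) pointwise. Hence Step 1 gives a contradiction. The subtle point is only that \(\mu_n\ne0\) for infinitely many \(n\), which surjectivity guarantees.

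\textbf{Step 3: the \(\ell_p\) and \(\ell_\infty\) cases.} For \(1\leqslant p<\infty\), a quotient onto \((\ell_p)_{\ptop}\) is in particular a continuous linear surjection. Theorem~\ref{thm:cp-lp-quotient} then gives JN, contradicting Step 1. For \((\ell_\infty)_{\ptop}\) we use the closed graph observation. The map \(Q\) is continuous from \(C_{\ptop}(K)\) into \(\mathbb R^\omega\), so it has closed graph for the norm topologies of \(C(K)\) and \(\ell_\infty\), and is therefore norm-bounded. Then \(Q\colon C(K)\to\ell_\infty\) is a bounded surjection. Composing with the surjection \(\ell_\infty\to\ell_2\) gives a surjection of \(C(K)\) onto a separable reflexive space, which is impossible for a Grothendieck space. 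Alternatively, we may compose with a coordinatewise projection that keeps pointwise continuity. A cleaner option uses the functionals \(e_n^*\circ Q\), which are finitely supported and uniformly bounded, together with the fact that \((\ell_\infty)_{\ptop}\) has \((c_0)_{\ptop}\)-type quotients. I expect the \(\ell_\infty\) case to need this Banach-space detour, since there is no direct JN sequence there.
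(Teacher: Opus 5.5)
Your Step 1 is correct: the Grothendieck property and the Schur property of \(\ell_1(A)\) rule out norm-one, weak-star null, finitely supported sequences. The \(\ell_p\) case, which combines Theorem~\ref{thm:cp-lp-quotient} with Step 1, is also fine. The \(c_0\) and \(\ell_\infty\) cases, however, both have genuine gaps.

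In Step 2 you have the danger in the wrong direction. Normalising is harmless when \(\|\mu_n\|\to\infty\), and more generally whenever \(\inf_n\|\mu_n\|>0\). It fails when \(\|\mu_n\|\to0\) along a subsequence, because \(\mu_n(f)/\|\mu_n\|\) then need not tend to \(0\). Under the Grothendieck hypothesis this is exactly what happens. The \(\mu_n=e_n^*\circ Q\) are themselves weak-star null and finitely supported, so your Step 1 argument applied to them gives \(\|\mu_n\|\to0\). The normalised sequence is then not weak-star null, and no contradiction with Step 1 arises. Knowing \(\mu_n\neq0\) does not help.

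What is missing is a quantitative use of surjectivity. Pick \(n_k\) with \(\|\mu_{n_k}\|\leqslant 4^{-k}\), let \(y\in c_0\) have \(y_{n_k}=2^{-k}\) and zeros elsewhere, and choose \(f\) with \(Qf=y\). Then \(2^{-k}=|\mu_{n_k}(f)|\leqslant 4^{-k}\|f\|_\infty\) for every \(k\), which is absurd. This is the claim \(\inf_n\|\mu_n\|>0\) in the proof of Theorem~\ref{thm:cp-lp-quotient}; for compact \(K\) even the functional-boundedness lemma is unnecessary. With this repair your route for \(c_0\) is a valid alternative to the paper's. The paper instead uses the closed graph theorem to get a bounded surjection \(C(K)\to c_0\), which is impossible because quotients of Grothendieck spaces are Grothendieck.

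The \(\ell_\infty\) case is where the proposal actually breaks. After the correct closed graph step, you claim a Grothendieck space cannot map onto a separable reflexive space. That is false: reflexive spaces are Grothendieck, every separable quotient of a Grothendieck space is reflexive, and, as the paper notes via Rosenthal--Lacey, Talagrand's \(C(K)\) does have a quotient isomorphic to \(\ell_2\). More fundamentally, \(\ell_\infty\) is itself a Grothendieck space, so no argument using only the Grothendieck half of Talagrand's conclusion can exclude this case.

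Your ``cleaner option'' fails too, because \((\ell_\infty)_{\ptop}\) has no quotient \((c_0)_{\ptop}\). The pulled-back coordinate functionals are finitely supported elements of \(\ell_1\) that are weakly null, hence norm null by Schur, and the surjectivity argument above then gives a contradiction.

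The missing ingredient is the other half of Talagrand's conclusion. For the bounded surjection \(Q\colon C(K)\to\ell_\infty=C(\beta\omega)\), the adjoint \(Q^*\colon M(\beta\omega)\to M(K)\) is weak-star continuous and injective. It therefore maps the weak-star compact set \(\{\delta_p:p\in\beta\omega\}\), homeomorphic to \(\beta\omega\), homeomorphically into the ball of radius \(\|Q\|\). After rescaling, \(M_1(K)\) contains a weak-star copy of \(\beta\omega\), contradicting Talagrand's theorem. This is the paper's argument in Proposition~\ref{prop:no-c0-linfty-pointwise}.
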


This should be compared with the Banach-space situation.  The Rosenthal--Lacey theorem says that \(C(K)\), for infinite compact \(K\), has a quotient isomorphic to \(c_0\) or to \(\ell_2\) \cite{Rosenthal1969,Lacey1972}.  Since quotients of Grothendieck spaces are Grothendieck, the \(c_0\)-alternative is impossible for Talagrand's \(C(K)\).  Hence \(C(K)\) has a Banach quotient isomorphic to \(\ell_2\).  Corollary~\ref{cor:talagrand-classical-quotients} says that this quotient cannot be realised by a quotient map \(C_{\ptop}(K)\to (\ell_2)_{\ptop}\), and indeed rules out all classical pointwise sequence quotients \((\ell_p)_{\ptop}\) for Talagrand compacta.

The preceding consistency statement should be read as a statement about the topological sufficient criterion for quotients of \(C_{\ptop}(X)\), not as a negative solution of the separable quotient problem itself: the conclusion says that, consistently, Talagrand's compactum cannot be handled by the 2DCP criterion from \cite{BKS2018,KKS2026}.

The local mechanism is the following anti-extension lemma.  It is useful to phrase it in Boolean language.  If \(F\) is a closed subspace of a zero-dimensional compactum \(K\), put
\[
        I_F=\{a\in\clop(K): a\cap F=\varnothing\}.
\]
A homeomorphism \(h:F\to G\) is dual to a Boolean isomorphism
\[
        \clop(K)/I_G\cong \clop(K)/I_F.
\]
The successor step below kills such an isomorphism in a way which cannot be repaired by later splitting.

\begin{maintheorem}\label{thm:D}
Let \(\alpha<\omega_1\) be a countable stage of Talagrand's construction, and suppose that a diamond guess at stage \(\alpha\) gives disjoint infinite closed sets \(F,G\subseteq K_\alpha\) and a homeomorphism \(h:F\to G\).  Then the successor split
\[
        K_{\alpha+1}=(Y_\alpha\times\{0\})\cup (Z_\alpha\times\{1\})
\]
can be chosen so that Talagrand's conditions \((A)\)--\((G)\) remain valid, but no final homeomorphism \(H:\widehat F\to\widehat G\) extending \(h\) through \(\pi_\alpha\) can exist; that is, there is no such \(H\) satisfying
\[
        \pi_\alpha H=h\pi_\alpha .
\]
\end{maintheorem}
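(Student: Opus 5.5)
The plan is to encode a local obstruction into the single successor split at stage \(\alpha\), of the form ``the lift over \(F\) is cut by a clopen set in a way that the lift over \(G\) is not, relative to \(h\)''. Here \(\widehat F=\pi_\alpha^{-1}[F]\) and \(\widehat G=\pi_\alpha^{-1}[G]\) are the preimages in the final compactum \(T\), and \(\pi_\beta:T\to K_\beta\) are the inverse-limit projections. Recall that in Talagrand's scheme \(K_{\alpha+1}\subseteq K_\alpha\times\{0,1\}\) is determined by two closed sets \(Y_\alpha,Z_\alpha\) with \(Y_\alpha\cup Z_\alpha=K_\alpha\). The new clopen set \(U_\alpha=Y_\alpha\times\{0\}\) separates the parts of \(Y_\alpha\setminus Z_\alpha\) and \(Z_\alpha\setminus Y_\alpha\), while over \(Y_\alpha\cap Z_\alpha\) the fibres are doubled.

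First I isolate what conditions \((A)\)--\((G)\) actually demand of \(Y_\alpha,Z_\alpha\). These are the scheduled requirement about the signed measure sequence and the disjoint clopen sequence handled at stage \(\alpha\), together with the requirement that \(Y_\alpha\cap Z_\alpha\) be a nowhere dense closed set of the prescribed kind. The usual Talagrand recipe builds \(Y_\alpha,Z_\alpha\) as closures of unions of countably many clopen pieces accumulating on a boundary set \(B_\alpha=Y_\alpha\cap Z_\alpha\). I would keep that recipe but retain the freedom to choose which pieces go to \(Y\) and which go to \(Z\), and also some freedom in where \(B_\alpha\) lies. The target is a choice for which \(B_\alpha\) meets \(F\) in a point \(x\) that is a limit of pieces on both sides in a pattern \(\mathcal P\) near \(x\). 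At the same time, near \(h(x)\in G\) the set \(B_\alpha\) should either miss \(G\), or be arranged with a different pattern that is transported by \(h\) in an incompatible way. Since \(F\) and \(G\) are disjoint closed sets in the metrisable zero-dimensional space \(K_\alpha\), they have disjoint clopen neighbourhoods. This lets me modify the split near \(F\) independently of the split near \(G\). Since \(F\) is infinite, it has a non-isolated point \(x\), which gives room to place the accumulation.

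Next I show that no \(H\) with \(\pi_\alpha H=h\pi_\alpha\) exists. Such an \(H\) maps the fibre over each \(y\in F\) onto the fibre over \(h(y)\). Consider \(\pi_{\alpha+1}\). The simplest version puts \(F\cap B_\alpha=\{x\}\) with \(x\) in the closure of both \((Y_\alpha\setminus Z_\alpha)\cap F\) and \((Z_\alpha\setminus Y_\alpha)\cap F\), and arranges \(G\cap B_\alpha=\varnothing\). Then \(\pi_{\alpha+1}[\widehat F]\) is a closed subset over \(F\) whose fibre over \(x\) has two points, while \(\pi_{\alpha+1}[\widehat G]\) projects homeomorphically onto \(G\). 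Because later stages only split further, \(\widehat G\) has the form \(G\times(\text{later splitting})\) along the clopen partition induced by \(U_\alpha\). Meanwhile \(\widehat F\) is disconnected over \(x\) by \(U_\alpha\) into two nonempty closed pieces \(\widehat F\cap U_\alpha\) and \(\widehat F\setminus U_\alpha\), each containing points over \(x\). I would then pull back \(U_\alpha\) by \(H^{-1}\) to obtain a clopen subset \(W\) of \(\widehat G\). I would use the fact that \(\widehat G\) is an inverse limit over \(G\) in which every clopen set is determined at some countable stage \(\beta>\alpha\). From this I would derive a contradiction with the later stages, through the requirement (part of \((A)\)--\((G)\)) that splittings at stages \(\beta>\alpha\) are over nowhere dense sets. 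The aim is to show that the relation \(\pi_\alpha H=h\pi_\alpha\) forces \(h\) to respect a clopen partition of \(F\setminus\{x\}\) that cannot be reproduced near \(h(x)\).

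The main obstacle is the last step. Later splittings could, in principle, create over \(h(x)\) exactly the needed two-sided structure, and then \(H\) could exist. Two ingredients must both hold. The first is an invariant preserved by all later Talagrand splits, for example: the set of points of \(K_\alpha\) over which the final fibre meets both \(U_\alpha\) and its complement equals \(B_\alpha\). The second is a choice of \(B_\alpha\) that is fully incompatible with \(h\), namely \(h[F\cap B_\alpha]\neq G\cap B_\alpha\). The first ingredient I would take directly from the structure of the inverse limit. For the second, I would check carefully that forcing \(x\in B_\alpha\) is compatible with the measure-splitting requirement, and if it is not, perturb the requirement's clopen pieces within a small neighbourhood of \(x\).
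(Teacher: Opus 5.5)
\textbf{There is a genuine gap: the obstruction you build at stage $\alpha$ does not persist to the final compactum.}

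Doubling the fibre over $x\in F$ while keeping the fibres over $G$ single, or more generally arranging $h[F\cap B_\alpha]\neq G\cap B_\alpha$, only obstructs maps that commute with $\pi_{\alpha+1}$. The hypothesis is merely $\pi_\alpha H=h\pi_\alpha$, so $H$ need not respect the coordinate $\alpha$.

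Consider the clopen set $W=H[\widehat F\cap\pi_{\alpha+1}^{-1}(U_\alpha)]\subseteq\widehat G$. It is determined at some stage $\beta>\alpha$. Its existence only requires that some later split doubles the fibre over $h(x)$ and, near $h(x)$, separates the lifts of $h[F\cap(Y_\alpha\setminus Z_\alpha)]$ from those of $h[F\cap(Z_\alpha\setminus Y_\alpha)]$. Nothing you impose rules this out. Nowhere density of later boundaries (which is not among $(A)$--$(G)$ anyway) does not help. A later boundary is just the accumulation set of the pieces $H_q$ of whatever sequence is scheduled then, and such a sequence may alternate between those two sides of $h(x)$.

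Your proposed invariant is true, namely that the final fibre meets both $U_\alpha$ and its complement exactly over $B_\alpha$. But it is irrelevant, because $H$ does not carry $\widehat F\cap\pi_{\alpha+1}^{-1}(U_\alpha)$ onto $\widehat G\cap\pi_{\alpha+1}^{-1}(U_\alpha)$. This is precisely the one-step fibre-cardinality diagonalisation that the paper flags as insufficient.

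Two secondary points:
\begin{itemize}
\item The application needs arbitrary closed $\widehat F,\widehat G$ with $\pi_\alpha[\widehat F]=F$ and $\pi_\alpha[\widehat G]=G$, not full preimages.
\item In Talagrand's recipe the boundary cannot be placed freely. It is dictated by the scheduled target sequence and must respect $(E)$ and $(F)$ for the old pairs.
\end{itemize}

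\textbf{The missing idea is a condition maintained at every later stage that forbids every final clopen set from separating two interleaved sequences.} In the paper this is Talagrand's $(F)$ combined with the non-ultrafilter property coming from $(B)$--$(D)$.

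The construction at stage $\alpha$ is as follows. Take $x_n\to x$ in $F$ and put $y_n=h(x_n)$. Register the source sequence $\nu_n(\alpha,\alpha)=\delta_{x_n}$ as a new admissible pair, with $J(\alpha,\alpha,\alpha+1)=\{r_q:q\in Q^*\}$ and with $N_0(\alpha,\alpha),N_1(\alpha,\alpha)$ its even and odd parts. Keep all the $x_n$ in $Y_\alpha\setminus Z_\alpha$. Split the target sequence $\delta_{y_n}$ via $(G)$ so that $y_{r_q}\in Z_\alpha\setminus Y_\alpha$ exactly for even $q$.

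Now suppose $H$ exists. Then $H^{-1}(c_\alpha\cap\widehat G)$ extends to a clopen set $A\subseteq T$ on which $\indicator_A(\widehat x_{r_q})$ alternates with the parity of $q$. On the other hand, $(F)$ at a stage $\beta$ where $A$ is determined forces $\indicator_A(\widehat x_n)$ to converge along $J(\alpha,\alpha,\beta)$. That set meets both $N_0$ and $N_1$ infinitely, which gives the contradiction.

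Your geometric picture can be rescued the same way with the roles swapped:
\begin{itemize}
\item Interleave sequences $a_k\to x$ from $Y_\alpha\setminus Z_\alpha$ and $b_k\to x$ from $Z_\alpha\setminus Y_\alpha$ into one sequence $(s_n)$.
\item Register $\delta_{h(s_n)}$ as an admissible pair whose $J$-set meets both the $a$-indices and the $b$-indices infinitely.
\item Keep every $h(s_n)$ off $B_\alpha$.
\end{itemize}
Then $(F)$ forbids (a clopen extension of) $W$ from alternating on the lifts $H(\widehat s_n)$. Without some such $(F)$-type filter, your argument does not close.
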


The proof of Theorem~\ref{thm:D} is the main point of the note.  A one-step fibre-cardinality diagonalisation is insufficient: a final homeomorphism may defer the information needed to later coordinates.  The repair is to combine Talagrand's condition \((G)\), which creates a parity split on a target sequence, with condition \((F)\), which gives a non-ultrafilter convergence filter on a paired source sequence.  The target split produces a final clopen set whose pullback under any alleged homeomorphism has alternating values on the source sequence.  Condition \((F)\) says that no final clopen set can alternate in this way.  This is why the obstruction is persistent.

\section{Talagrand's scheme and the splitter property}

We recall only the parts of Talagrand's construction that are needed below.  Let \(\Omega=\omega_1\).  Talagrand constructs compact spaces
\[
        K_\alpha\subseteq 2^\alpha\qquad (\omega\leqslant \alpha\leqslant \Omega)
\]
with bonding maps given by coordinate projections.  One starts with \(K_\omega=2^\omega\).  At a successor stage,
\begin{equation}\label{eq:successor}
        K_{\alpha+1}=(Y_\alpha\times\{0\})\cup (Z_\alpha\times\{1\}),
\end{equation}
where \(Y_\alpha,Z_\alpha\) are closed subsets of \(K_\alpha\) with \(Y_\alpha\cup Z_\alpha=K_\alpha\).  At a limit stage \(\rho\), the space \(K_\rho\) is the inverse limit of the earlier stages.  Since each \(\alpha<\omega_1\) is countable, every proper stage \(K_\alpha\) is compact metrisable and zero-dimensional.

For a clopen sequence scheduled at a stage \(\theta\), Talagrand writes
\[
        V_n(\theta,\alpha)=\pi_{\beta,\alpha}^{-1}(U_n(\beta,\gamma))\cap K_\alpha,
        \qquad \eta(\theta)=(\beta,\gamma),
\]
where \((U_n(\beta,\gamma))\) is a pairwise disjoint sequence of non-empty clopen subsets of \(K_\beta\).  For a signed measure sequence \((\nu_n(\theta,\delta))\subseteq M(K_\theta)\), define
\[
        B_n(\theta,\delta,\alpha)=
        \{\lambda\in M(K_\alpha):\|\lambda\|=1,
        (\pi_{\theta,\alpha})_*\lambda=\nu_n(\theta,\delta)\}.
\]
The enumerated sequences satisfy
\begin{equation}\label{eq:admissible}
        \|\nu_n(\theta,\delta)\|=|\nu_n(\theta,\delta)|(V_n(\theta,\theta))=1,
        \qquad \|\nu_n(\theta,\delta)^+\|\geqslant \frac12.
\end{equation}

Talagrand's induction constructs sets
\[
        I(\theta,\alpha),\quad J(\theta,\delta,\alpha),\quad
        N_0(\theta,\delta),N_1(\theta,\delta)\subseteq\omega
\]
satisfying conditions \((A)\)--\((G)\).  The precise formulation is in \cite[Section III]{Talagrand1980}; the two conditions used most directly here are:

\begin{itemize}[leftmargin=2em]
\item[\((F)\)] whenever \(\lambda_n\in B_n(\theta,\delta,\alpha)\), the weak-star limit
\[
        \lim_{n\to J(\theta,\delta,\alpha)}\lambda_n
\]
exists;

\item[\((G)\)] if \(\alpha=\alpha'+1\) and \(\eta(\alpha')=(\theta,\delta)\), then for every choice \(\lambda_n\in B_n(\theta,\delta,\alpha)\), both sets
\[
        \{n:\lambda_n(Z_{\alpha'}\times\{1\})\geqslant 1/6\},
        \qquad
        \{n:\lambda_n(Z_{\alpha'}\times\{1\})=0\}
\]
are infinite.
\end{itemize}

Conditions \((B)\)--\((D)\) imply that, for each pair \((\theta,\delta)\), the filter generated by the sets \(J(\theta,\delta,\alpha)\), \(\alpha>\max\{\theta,\delta\}\), together with the Fr\'echet filter, is not an ultrafilter: every member of it meets both \(N_0(\theta,\delta)\) and \(N_1(\theta,\delta)\) infinitely.

Two standing facts about the inverse system will be used repeatedly.  First, all bonding maps \(\pi_{\beta,\alpha}\colon K_\alpha\to K_\beta\), and hence all limit projections \(\pi_\alpha:=\pi_{\alpha,\omega_1}\colon T\to K_\alpha\), are surjective: at a successor stage this is the covering requirement \(Y_\alpha\cup Z_\alpha=K_\alpha\) in \eqref{eq:successor}, and surjectivity of the limit projections of an inverse system of non-empty compacta with surjective bonding maps is standard.  Secondly, if \(\rho\) is a limit ordinal and \(W\subseteq K_\rho\) is clopen, then, by compactness, \(W\) is a finite union of relative cylinders and hence depends on a finite set of coordinates; consequently there is \(\beta<\rho\) such that \(W=\pi_{\beta,\rho}^{-1}(W^\beta)\), where \(W^\beta=\pi_{\beta,\rho}[W]\) is the (uniquely determined) clopen trace of \(W\) in \(K_\beta\), and then the same holds at every stage of \([\beta,\rho)\).  In particular both facts apply to \(\rho=\omega_1\) and \(T=K_{\omega_1}\).

We shall use a slightly stronger scheduling convention.  If \(\eta(\alpha)=(\theta,\delta)\), then \(\theta\leqslant\alpha\) and \(\delta\leqslant\alpha\), and every pair is scheduled cofinally often above its two coordinates: for every \((\theta,\delta)\in[\omega,\omega_1)\times\omega_1\), the set
\[
        \{\alpha<\omega_1:\alpha\geqslant\max\{\theta,\delta\},\ \eta(\alpha)=(\theta,\delta)\}
\]
is cofinal in \(\omega_1\).  Talagrand's proof uses only eventual treatment of the admissible pairs, so replacing the original bijective schedule by such a surjective cofinal schedule is harmless.  The modified scheduling map used in Section~\ref{sec:predicted} is chosen with this convention, and the splitter lemma below applies to that construction.

We shall use the following direct extraction from Talagrand's proof.  It isolates exactly what the completed bookkeeping certifies about the final compactum; as the proof shows, the only induction condition consumed is \((G)\), together with the definitional layer of the enumerations.

\begin{lemma}\label{lem:splitter}
Let \(T\) be the final compactum of a realisation satisfying Talagrand's conditions.  Let \((W_n)\) be pairwise disjoint clopen subsets of \(T\), and let \(\lambda_n\in M(T)\) satisfy
\[
        \|\lambda_n\|=|\lambda_n|(W_n)=1,
        \qquad \|\lambda_n^+\|\geqslant \frac12 .
\]
Then there is a clopen set \(Z\subseteq T\) such that both sets
\[
        \{n:\lambda_n(Z)\geqslant 1/6\},
        \qquad
        \{n:\lambda_n(Z)=0\}
\]
are infinite.
\end{lemma}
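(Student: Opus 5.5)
The plan is to push the data down to a countable stage, recognise it as a scheduled pair $(\theta,\delta)$ from Talagrand's enumerations, and then invoke condition $(G)$ at a successor stage where that pair is treated.

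\emph{Reduction to a countable stage.} By the second standing fact applied to $\rho=\omega_1$, each clopen $W_n$ has the form $W_n=\pi_{\beta_n}^{-1}(W_n^{\beta_n})$ for some countable $\beta_n$. Put $\beta=\sup_n\beta_n<\omega_1$. Then every $W_n=\pi_\beta^{-1}(U_n)$ with $U_n=\pi_\beta[W_n]$ clopen in $K_\beta$, and the $U_n$ are pairwise disjoint and non-empty; here disjointness uses surjectivity of $\pi_\beta$. So $(U_n)$ is one of the enumerated disjoint clopen sequences, say $(U_n(\beta,\gamma))$. Take any $\theta\geqslant\beta$ with $\eta(\theta)=(\beta,\gamma)$, which exists by the scheduling convention. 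Then $V_n(\theta,\theta)=\pi_{\beta,\theta}^{-1}(U_n)$ and $\pi_\theta^{-1}(V_n(\theta,\theta))=W_n$.

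\emph{Matching the measures.} Let $\nu_n=(\pi_\theta)_*\lambda_n\in M(K_\theta)$. Since $|\lambda_n|$ is concentrated on $W_n=\pi_\theta^{-1}(V_n(\theta,\theta))$ with total mass $1$, I would check two things. First, $\|\nu_n\|=|\nu_n|(V_n(\theta,\theta))=1$, because push-forward does not increase total variation and $|\nu_n|(V_n)\geqslant|\nu_n(A)|$ over subsets $A$, so some care is needed. Second, $\|\nu_n^+\|\geqslant 1/2$.

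This is the main obstacle: push-forward can cancel positive and negative parts, so $\|\nu_n\|$ may drop below $1$. To fix this I would enlarge the stage. Each $\lambda_n$ is a Radon measure on $T$, so the Hahn decomposition sets can be approximated by clopen sets, each depending on countably many coordinates. Choosing $\theta$ large enough to capture countably many such clopen sets for each $n$, I obtain $\|(\pi_\theta)_*\lambda_n\|=1$ and $\|((\pi_\theta)_*\lambda_n)^+\|=\|\lambda_n^+\|\geqslant1/2$. An alternative is to note that Talagrand's enumeration already ranges over all admissible sequences at all stages, together with the lift structure of the sets $B_n$. I would also need the measure sequence $(\nu_n)$ to be among the enumerated sequences $\nu_n(\theta,\delta)$. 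Under CH, $M(K_\theta)^\omega$ has size $\mathfrak c=\omega_1$, so the enumeration covers it with some index $\delta$. Then $\lambda_n\in B_n(\theta,\delta,\omega_1)$.

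\emph{Applying $(G)$.} By the cofinal scheduling convention, pick $\alpha'\geqslant\max\{\theta,\delta\}$ with $\eta(\alpha')=(\theta,\delta)$, and set $\alpha=\alpha'+1$. The measures $(\pi_\alpha)_*\lambda_n$ lie in $B_n(\theta,\delta,\alpha)$, since they have norm $1$ (the norm is squeezed between $1$ at stages $\theta$ and $\omega_1$) and push forward to $\nu_n$. So $(G)$ gives that both
\[
\{n:(\pi_\alpha)_*\lambda_n(Z_{\alpha'}\times\{1\})\geqslant1/6\}\quad\text{and}\quad\{n:(\pi_\alpha)_*\lambda_n(Z_{\alpha'}\times\{1\})=0\}
\]
are infinite. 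The set $Z_{\alpha'}\times\{1\}=\{x\in K_\alpha:x(\alpha')=1\}$ is clopen in $K_\alpha$. Put $Z=\pi_\alpha^{-1}(Z_{\alpha'}\times\{1\})$, which is clopen in $T$. Then $\lambda_n(Z)=(\pi_\alpha)_*\lambda_n(Z_{\alpha'}\times\{1\})$, and this completes the argument.
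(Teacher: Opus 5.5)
Your proof is correct and follows the paper's route: you reflect the supports to a countable stage, enlarge the stage until the push-forwards keep their total variation (so they also keep $\|\lambda_n^+\|$), identify the projected sequence as a scheduled admissible pair $(\theta,\delta)$, and apply $(G)$ at a successor stage scheduling that pair. Your clopen approximation of the Hahn sets does the job of the paper's finite clopen partitions, and choosing $\theta$ ``large enough'' is legitimate because $(\beta,\gamma)$ is scheduled cofinally often. The justification of $|\nu_n|(V_n(\theta,\theta))=1$ is garbled, but it follows directly: $\nu_n$ vanishes on every Borel subset of $K_\theta\setminus V_n(\theta,\theta)$, since its preimage misses $W_n$, and $\|\nu_n\|=1$.
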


\begin{proof}
The proof runs in four steps: the supports are reflected to a single countable stage; the total variations are reflected to a possibly larger countable stage; the projected sequence is recognised as one of the scheduled admissible sequences; and condition \((G)\) is applied at the successor stage where that sequence is treated.  The second step is the one that a naive reflection argument omits: an arbitrary countable projection need not preserve the masses \(\|\lambda_n\|\), and without mass preservation the projected sequence need not be admissible.

\emph{Step 1: reflection of the supports.}
Each \(W_n\) is non-empty, since \(|\lambda_n|(W_n)=1\).  By the remarks preceding the lemma, for each \(n\) there is \(\beta_n<\omega_1\) such that \(W_n\) is the full \(\pi_{\beta_n}\)-preimage of its clopen trace in \(K_{\beta_n}\).  As \(\operatorname{cf}(\omega_1)>\omega\), the ordinal \(\beta^0=\sup_n\beta_n\) is countable, and every \(W_n\) is the full preimage of a non-empty clopen trace at every stage of \([\beta^0,\omega_1)\).  Moreover, at any such stage the traces are pairwise disjoint: if a point \(s\) lay in the traces of \(W_m\) and \(W_n\) with \(m\neq n\), then, by surjectivity of the limit projection, any \(t\in T\) projecting onto \(s\) would lie in \(W_m\cap W_n=\varnothing\).

\emph{Step 2: reflection of the masses.}
We claim that there is \(\beta\in[\beta^0,\omega_1)\) such that
\begin{equation}\label{eq:mass-reflection}
        \bigl\|(\pi_\beta)_*\lambda_n\bigr\|=1
        \qquad\text{for every }n\in\omega .
\end{equation}
Fix \(n\) and \(\varepsilon>0\).  By the Riesz description of the total variation and the Stone--Weierstrass theorem (locally constant functions are uniformly dense in \(C(T)\), since \(\clop(T)\) separates points), there are a finite clopen partition \((E_i)_{i\leqslant k}\) of \(T\) and scalars \(|c_i|\leqslant 1\) with \(\sum_{i\leqslant k}c_i\,\lambda_n(E_i)>1-\varepsilon\); in particular \(\sum_{i\leqslant k}|\lambda_n(E_i)|>1-\varepsilon\).  The finitely many \(E_i\) are full preimages of clopen sets at a common countable stage \(\gamma\).  For any \(\beta'\geqslant\gamma\), the traces \(E_i^{\beta'}=\pi_{\beta'}[E_i]\) form a clopen partition of \(K_{\beta'}\) (pairwise disjointness again uses surjectivity of \(\pi_{\beta'}\)), and \((\pi_{\beta'})_*\lambda_n(E_i^{\beta'})=\lambda_n(E_i)\), whence
\[
        \bigl\|(\pi_{\beta'})_*\lambda_n\bigr\|
        \geqslant\sum_{i\leqslant k}|\lambda_n(E_i)|
        >1-\varepsilon .
\]
Applying this with \(\varepsilon=1/k\), \(k\geqslant 1\), and using \(\operatorname{cf}(\omega_1)>\omega\) twice --- once over \(k\) for fixed \(n\), and once over \(n\) --- produces \(\beta\in[\beta^0,\omega_1)\) satisfying \eqref{eq:mass-reflection}; the reverse inequality \(\|(\pi_\beta)_*\lambda_n\|\leqslant\|\lambda_n\|=1\) always holds, because a push-forward is a contraction for the total variation norm.  Note also that \(\beta'\mapsto\|(\pi_{\beta'})_*\lambda_n\|\) is non-decreasing, since \((\pi_{\beta})_*=(\pi_{\beta,\beta'})_*(\pi_{\beta'})_*\) for \(\beta\leqslant\beta'\); hence \eqref{eq:mass-reflection} persists at every stage \(\geqslant\beta\).

\emph{Step 3: the projected sequence is scheduled.}
Let \(W_n^\beta\) denote the traces of Step 1 at the stage \(\beta\) of Step 2.  They form a pairwise disjoint sequence of non-empty clopen subsets of the metrisable compactum \(K_\beta\); since the enumeration \((U_n(\beta,\gamma))_{n\in\omega,\,\gamma<\omega_1}\) lists all such sequences (\(\diamondsuit\) implies CH, so there are only \(\omega_1\) of them), there is \(\gamma<\omega_1\) with \(U_n(\beta,\gamma)=W_n^\beta\) for all \(n\).  Choose \(\theta\geqslant\max\{\beta,\gamma\}\) with \(\eta(\theta)=(\beta,\gamma)\); then by the definition of the \(V\)'s,
\[
        V_n(\theta,\theta)=\pi_{\beta,\theta}^{-1}\bigl(U_n(\beta,\gamma)\bigr)\cap K_\theta,
        \qquad
        W_n=\pi_\theta^{-1}\bigl(V_n(\theta,\theta)\bigr).
\]
Set \(\mu_n=(\pi_\theta)_*\lambda_n\in M(K_\theta)\).  We verify that the sequence \((\mu_n)\) is admissible in the sense of \eqref{eq:admissible}.

First, \(\|\mu_n\|=1\), by Step 2 and monotonicity, since \(\theta\geqslant\beta\).

Secondly, \(|\mu_n|\bigl(V_n(\theta,\theta)\bigr)=1\).  Indeed, if \(B\) is a Borel subset of \(K_\theta\setminus V_n(\theta,\theta)\), then \(\pi_\theta^{-1}(B)\subseteq T\setminus W_n\), which is \(|\lambda_n|\)-null; so \(\mu_n\) vanishes on every Borel subset of the clopen set \(K_\theta\setminus V_n(\theta,\theta)\), whence \(|\mu_n|\bigl(K_\theta\setminus V_n(\theta,\theta)\bigr)=0\).

Thirdly, \(\|\mu_n^+\|\geqslant 1/2\).  Put \(\sigma=(\pi_\theta)_*\lambda_n^{+}+(\pi_\theta)_*\lambda_n^{-}\).  From \(\mu_n=(\pi_\theta)_*\lambda_n^{+}-(\pi_\theta)_*\lambda_n^{-}\) we get \(|\mu_n|\leqslant\sigma\); both are positive measures of the same total mass, namely
\[
        \sigma(K_\theta)=\|\lambda_n^{+}\|+\|\lambda_n^{-}\|=\|\lambda_n\|=1=\|\mu_n\|=|\mu_n|(K_\theta),
\]
so \(|\mu_n|=\sigma\).  Consequently
\[
        \mu_n^{+}=\tfrac12\bigl(|\mu_n|+\mu_n\bigr)=(\pi_\theta)_*\lambda_n^{+},
        \qquad
        \|\mu_n^{+}\|=\|\lambda_n^{+}\|\geqslant\tfrac12 .
\]
In other words, once the projection preserves the total variation, it also commutes with the Jordan decomposition; this is exactly where \eqref{eq:mass-reflection} is needed, and it is the point that fails for an arbitrary countable stage.

By \eqref{eq:admissible} and the totality of the enumeration \((\nu_n(\theta,\delta))_{n\in\omega,\,\delta<\omega_1}\) (CH again), there is \(\delta<\omega_1\) with
\[
        \mu_n=\nu_n(\theta,\delta)\qquad\text{for all }n\in\omega .
\]

\emph{Step 4: application of condition \((G)\).}
Choose \(\alpha'\geqslant\max\{\theta,\delta\}\) with \(\eta(\alpha')=(\theta,\delta)\).  Put \(\widetilde\lambda_n=(\pi_{\alpha'+1})_*\lambda_n\in M(K_{\alpha'+1})\).  Then
\[
        1=\|\mu_n\|
        =\bigl\|(\pi_{\theta,\alpha'+1})_*\widetilde\lambda_n\bigr\|
        \leqslant\|\widetilde\lambda_n\|
        \leqslant\|\lambda_n\|=1,
\]
and \((\pi_{\theta,\alpha'+1})_*\widetilde\lambda_n=(\pi_\theta)_*\lambda_n=\nu_n(\theta,\delta)\).  Hence \(\widetilde\lambda_n\in B_n(\theta,\delta,\alpha'+1)\) for every \(n\), and condition \((G)\) at the successor rank \(\alpha'+1\) yields that both sets
\[
        \bigl\{n:\widetilde\lambda_n(Z_{\alpha'}\times\{1\})\geqslant 1/6\bigr\},
        \qquad
        \bigl\{n:\widetilde\lambda_n(Z_{\alpha'}\times\{1\})=0\bigr\}
\]
are infinite.  By \eqref{eq:successor}, \(Z_{\alpha'}\times\{1\}=\{t\in K_{\alpha'+1}:t(\alpha')=1\}\) is clopen in \(K_{\alpha'+1}\) (the trace of a one-coordinate cylinder), so
\[
        Z:=\pi_{\alpha'+1}^{-1}\bigl(Z_{\alpha'}\times\{1\}\bigr)
        =\{t\in T:t(\alpha')=1\}
\]
is clopen in \(T\); and since \(Z\) is a full preimage, \(\lambda_n(Z)=\widetilde\lambda_n(Z_{\alpha'}\times\{1\})\) for every \(n\).  The two sets in the statement are therefore infinite for \((\lambda_n)\) and \(Z\), as required.
\end{proof}

\begin{remark}\label{rem:splitter}
Three by-products of the proof are worth recording.  First, the hypotheses of Lemma~\ref{lem:splitter} are inherited by subsequences --- a subsequence of a pairwise disjoint sequence of non-empty clopen sets is again one, and the corresponding measures remain admissible --- so the conclusion holds within every subsequence of \((\lambda_n)\); this is the form used in Corollary~\ref{cor:borel-splitter}.  Secondly, the splitting clopen set can always be taken to be a single successor-coordinate cylinder \(Z=\{t\in T:t(\alpha')=1\}\).  Thirdly, the proof consumes only condition \((G)\) and the totality of the enumerations and of the scheduling map; it is insensitive to how the remaining freedom in the construction is spent.  This is why the splitter survives the diamond-guided bookkeeping of Section~\ref{sec:predicted} unchanged, while condition \((F)\) is kept in reserve for Lemma~\ref{lem:source-filter}.
\end{remark}

\begin{corollary}\label{cor:borel-splitter}
Let \((\mu_n)\) be probability measures on \(T\) carried by pairwise disjoint Borel sets.  Then no subsequence of \((\mu_n)\) is weak-star convergent.
\end{corollary}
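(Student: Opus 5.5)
We argue by contradiction, using Lemma~\ref{lem:splitter} in the subsequence form recorded in Remark~\ref{rem:splitter}. Suppose some subsequence, still written \((\mu_n)\), converges weak-star to a measure \(\mu\). Let \(B_n\) be pairwise disjoint Borel sets with \(\mu_n(B_n)=1\).

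The first step is to replace the Borel carriers by pairwise disjoint \emph{clopen} sets, at the cost of passing to a further subsequence and perturbing the measures slightly. By regularity choose compact \(C_n\subseteq B_n\) with \(\mu_n(C_n)>1-\varepsilon_n\), where \(\varepsilon_n\to0\) quickly. The sets \(C_n\) are pairwise disjoint compacta. Separating them by clopen sets in the zero-dimensional \(T\) is easy for finitely many sets, but a disjoint clopen sequence requires a diagonal extraction. We build inductively an increasing sequence \(n_0<n_1<\dots\) and pairwise disjoint clopen sets \(W_k\supseteq C_{n_k}\) such that \(\mu_{n_k}\) gives mass at least \(1-2^{-k}\) to \(W_k\) and only small mass to the earlier \(W_j\). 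At stage \(k\), the clopen set \(A_k=W_0\cup\dots\cup W_{k-1}\) has \(\mu_n(A_k)\to\mu(A_k)\), because indicators of clopen sets are continuous. Since the \(B_n\) are disjoint and \(\mu\) is finite, the remaining measures have asymptotically small mass on each earlier \(C_{n_j}\). I expect this to be the main obstacle: \(\mu(A_k)\) need not be small, so mass escaping into \(A_k\) must be controlled. First I would shrink each \(W_j\) to a clopen neighbourhood of \(C_{n_j}\) of \(\mu\)-measure at most \(\mu(C_{n_j})+2^{-j}\). This is possible by outer regularity of \(\mu\), since clopen sets form a base for neighbourhoods of compacta. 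Because \(\sum_j\mu(C_{n_j})\) need not be small, I would instead choose the \(n_k\) so that \(\mu(C_{n_k})\) is tiny. This is possible since \(\mu\) is finite and the sets \(C_n\) are disjoint, so \(\mu(C_n)\to0\). Then \(\mu(A_k)\) is small for all \(k\), and so \(\mu_{n_k}(A_k)\) is small for large \(n_k\). Taking \(W_k=U_k\setminus A_k\) for a clopen \(U_k\supseteq C_{n_k}\) then gives disjointness.

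Next we produce admissible measures. Put \(\lambda_k=\mu_{n_k}|_{W_k}/\mu_{n_k}(W_k)\), which are probability measures with \(\|\lambda_k\|=|\lambda_k|(W_k)=1\) and \(\|\lambda_k^+\|=1\geqslant\tfrac12\). Moreover, \(\|\lambda_k-\mu_{n_k}\|\to0\), so \((\lambda_k)\) is still weak-star convergent to \(\mu\).

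Finally, Lemma~\ref{lem:splitter} gives a clopen \(Z\) with \(\lambda_k(Z)\geqslant1/6\) for infinitely many \(k\) and \(\lambda_k(Z)=0\) for infinitely many \(k\). Since \(\indicator_Z\in C(T)\), the sequence \(\lambda_k(Z)\) must converge to \(\mu(Z)\), which is a contradiction.
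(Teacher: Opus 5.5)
There is a gap in the quantitative claim of your first step. You assert that the \(W_k\) can be chosen with \(\mu_{n_k}(W_k)\geqslant1-2^{-k}\), and you then use \(\|\lambda_k-\mu_{n_k}\|=2\bigl(1-\mu_{n_k}(W_k)\bigr)\to0\) to get \(\lambda_k\to\mu\) weak-star. Your construction does not deliver this. It gives only \(\mu(A_k)<\delta\) for one fixed \(\delta\), and even that requires replacing the slack \(2^{-j}\) by something summing to less than \(\delta\), e.g.\ \(\delta2^{-j-1}\). Hence \(\mu_{n_k}(A_k)<\delta\) and \(\mu_{n_k}(W_k)>1-\varepsilon_{n_k}-\delta\). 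Since \(A_k\supseteq W_0\) and \(\mu_n(W_0)\to\mu(W_0)\), you have \(\liminf_k\mu_{n_k}(A_k)\geqslant\mu(W_0)\), which need not be \(0\).

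A cleverer choice using only the tools you invoke cannot rescue the claim. If pairwise disjoint clopen \(W_k\) satisfied \(\mu_{n_k}(W_k)\to1\), then \(\mu(W_j)=\lim_k\mu_{n_k}(W_j)\leqslant\lim_k\bigl(1-\mu_{n_k}(W_k)\bigr)=0\) for every \(j\). Everything before your appeal to Lemma~\ref{lem:splitter} is general measure theory on a zero-dimensional compactum, so it would have to work on \(2^\omega\) too. There it fails: let \(I_1<I_2<\cdots\) be finite blocks of coordinates with \(|I_m|=m+1\), and let \(\mu_n\) be the normalised restrictions of Haar measure \(\mu\) to the disjoint closed sets \(\{x: x|_{I_n}\equiv1,\ x|_{I_m}\not\equiv1\text{ for all }m>n\}\). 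These converge weak-star to \(\mu\) and satisfy \(\mu_n\ll\mu\), so no such \(W_k\) exist. A smaller, harmless slip: \(W_k=U_k\setminus A_k\) need not contain \(C_{n_k}\).

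The repair is to keep a fixed tolerance, as the paper does, and never ask \((\lambda_k)\) to converge. Take \(\varepsilon_n\leqslant\varepsilon\) and \(\delta\) with \(\eta:=\varepsilon+\delta<1/7\), so that \(m_k:=\mu_{n_k}(W_k)>1-\eta\) for all \(k\). Since \(\mu_{n_k}(Z\cap W_k)=m_k\lambda_k(Z)\), you get \(\mu_{n_k}(Z)\geqslant(1-\eta)\lambda_k(Z)\) and \(\mu_{n_k}(Z)\leqslant\lambda_k(Z)+\eta\). The clopen \(Z\) of Lemma~\ref{lem:splitter} then gives \(\mu_{n_k}(Z)\geqslant(1-\eta)/6\) for infinitely many \(k\) and \(\mu_{n_k}(Z)\leqslant\eta\) for infinitely many \(k\). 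This contradicts \(\mu_{n_k}(Z)\to\mu(Z)\). (The paper's \(\varepsilon=1/100\) gives the thresholds \(49/300\) and \(1/50\).)

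With this correction your argument is valid. It differs from the paper's only in the disjointification step. The paper invokes Rosenthal's disjointification lemma, which works along any infinite set of indices without reference to a limit measure. You instead exploit the assumed weak-star limit \(\mu\), through \(\mu(C_n)\to0\), outer regularity and convergence on clopen sets, to build the disjoint clopen sets by an elementary diagonal induction. This avoids citing Rosenthal's lemma, at the cost of tying the construction to the proof by contradiction.
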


\begin{proof}
By Rosenthal's disjointification lemma \cite{Rosenthal1970}, for every infinite set of indices and every \(\varepsilon>0\) one may pass to an infinite subset and find pairwise disjoint open sets \(U_n\) with \(\mu_n(U_n)>1-\varepsilon\).  By zero-dimensionality and regularity, choose pairwise disjoint clopen sets \(W_n\subseteq U_n\) with \(\mu_n(W_n)>1-2\varepsilon\).  Apply Lemma \ref{lem:splitter} to the normalised restrictions \(\mu_n(\,\cdot\cap W_n)/\mu_n(W_n)\).  For instance, with \(\varepsilon=1/100\), one obtains a clopen \(Z\) such that \(\mu_n(Z)\geqslant 49/300\) on infinitely many indices and \(\mu_n(Z)\leqslant 1/50\) on infinitely many others.  Since \(\indicator_Z\in C(T)\), weak-star convergence is impossible on that subsequence.
\end{proof}

\section{Reflection of final homeomorphisms}

We shall need a standard spectral observation for compact subspaces of \(2^{\omega_1}\).  If \(F\subseteq T\), write \(F_\alpha=\pi_\alpha[F]\).

\begin{lemma}\label{lem:reflection}
Let \(T\subseteq 2^{\omega_1}\) be an inverse limit as above.  Let \(F,G\subseteq T\) be disjoint closed subspaces and let \(H:F\to G\) be a homeomorphism.  Then there is a club \(C\subseteq\omega_1\) such that for every \(\alpha\in C\):
\begin{enumerate}[label=\textup{(\roman*)},leftmargin=2em]
\item \(F_\alpha\cap G_\alpha=\varnothing\);
\item there is a homeomorphism \(H_\alpha:F_\alpha\to G_\alpha\) satisfying
\[
        \pi_\alpha H=H_\alpha\pi_\alpha;
\]
\item if \(F\) is non-metrisable, then \(F_\alpha\) is infinite for all sufficiently large \(\alpha\).
\end{enumerate}
\end{lemma}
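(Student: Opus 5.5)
We prove each of (i)--(iii) separately by a standard closing-off argument and then intersect the resulting clubs (for (iii) we only need a final segment, which is itself a club).

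For (i): since \(F,G\) are disjoint compacta in \(T\), there is a clopen set \(W\subseteq T\) with \(F\subseteq W\) and \(W\cap G=\varnothing\). By the second standing fact, \(W=\pi_\beta^{-1}(W^\beta)\) for some \(\beta<\omega_1\). Then for every \(\alpha\geqslant\beta\) we have \(F_\alpha\subseteq W^\alpha\) and \(G_\alpha\cap W^\alpha=\varnothing\). So (i) holds on the final segment \([\beta,\omega_1)\).

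For (ii) the real content is that \(\pi_\alpha H\) factors through \(\pi_\alpha\restriction F\). It suffices to find a club \(C\) such that for \(\alpha\in C\) and \(x,y\in F\):
\[
\pi_\alpha(x)=\pi_\alpha(y)\iff \pi_\alpha(Hx)=\pi_\alpha(Hy).
\]
Granting this, \(H_\alpha(\pi_\alpha x):=\pi_\alpha(Hx)\) is a well-defined bijection \(F_\alpha\to G_\alpha\). It is continuous because \(F_\alpha\) carries the quotient topology of the closed map \(\pi_\alpha\restriction F\) from a compactum, and a continuous bijection of compacta is a homeomorphism.

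To get the club we close off in both directions. For each coordinate \(\xi<\omega_1\), the function \(x\mapsto (Hx)(\xi)\) is continuous on \(F\) with values in \(\{0,1\}\). Hence it is determined by a clopen subset of \(F\). That clopen set is the trace of a clopen subset of \(T\), which depends on finitely many coordinates, so \(x\mapsto (Hx)(\xi)\) depends only on \(x\restriction s(\xi)\) for some finite \(s(\xi)\subseteq\omega_1\). Symmetrically, applying the same argument to \(H^{-1}\) gives finite sets \(s'(\xi)\) with \(x\mapsto(H^{-1}y)(\xi)\) depending only on \(y\restriction s'(\xi)\). Let \(f(\gamma)=\sup\bigl(\bigcup_{\xi<\gamma}(s(\xi)\cup s'(\xi))\bigr)+1\), which is countable. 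The set of \(\alpha\) closed under \(f\) is a club. For \(\alpha\) in it:
\begin{itemize}
\item if \(x\restriction\alpha=y\restriction\alpha\), then for each \(\xi<\alpha\) we have \(s(\xi)\subseteq\alpha\), so \((Hx)(\xi)=(Hy)(\xi)\), giving \(Hx\restriction\alpha=Hy\restriction\alpha\);
\item conversely, applying the \(H^{-1}\) version to \(Hx,Hy\) shows that \(Hx\restriction\alpha=Hy\restriction\alpha\) forces \(x\restriction\alpha=y\restriction\alpha\).
\end{itemize}
This is the main step. The only subtlety is using both \(H\) and \(H^{-1}\): the backward direction is needed for injectivity of \(H_\alpha\), and hence for \(H_\alpha\) to be a homeomorphism rather than just a continuous surjection.

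For (iii), we argue by contraposition. Suppose \(F_\alpha\) is finite for cofinally many \(\alpha\). Since \(|F_\alpha|\) is non-decreasing in \(\alpha\) (as \(F_\beta=\pi_{\beta,\alpha}[F_\alpha]\)), every \(F_\alpha\) is then finite. We split into two cases:
\begin{itemize}
\item If the sizes \(|F_\alpha|\) are unbounded, the standard countable-cofinality argument gives a countable \(\alpha_0\) with \(|F_{\alpha_0}|\geqslant\aleph_0\), a contradiction. Indeed, choose \(\alpha_k\) with \(|F_{\alpha_k}|\geqslant k\) and take \(\alpha_0=\sup_k\alpha_k<\omega_1\).
\item Hence the sizes stabilise at some finite \(m\) from some \(\beta\) on. Then \(\pi_{\beta}\restriction F\) separates points: distinct \(x,y\in F\) differ at some coordinate \(\alpha\), and at stages \(\geqslant\max(\alpha,\beta)\) the bonding maps on \(F_\bullet\) are bijections. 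So \(F\cong F_\beta\) is finite, hence metrisable.
\end{itemize}
Thus non-metrisable \(F\) gives \(F_\alpha\) infinite for all large \(\alpha\). (Equally, if \(F_\alpha\) were eventually finite of constant size then \(F\) is finite.) Finally, intersecting the final segments from (i) and (iii) with the club from (ii) yields the required \(C\).
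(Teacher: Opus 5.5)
Your proof is correct and follows the paper's route. The paper uses a clopen separator depending on finitely many coordinates for (i), finite coordinate-dependence of each output coordinate of $H$ and $H^{-1}$ closed off to a club for (ii), and monotonicity of $|F_\alpha|$ with a countable-supremum argument for (iii), exactly as you do. One cosmetic point: restrict to limit ordinals closed under your $f$, since at a successor $\alpha=\zeta+1$ the inclusion $s(\zeta)\subseteq\alpha$ is not guaranteed; your explicit intersection with the tail from (iii) is what the later use in Theorem~\ref{thm:no-homeomorphic-disjoint} actually requires.
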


\begin{proof}
Since \(F\) and \(G\) are disjoint closed subsets of the zero-dimensional compactum \(T\), there is a clopen set \(A\subseteq T\) with \(F\subseteq A\) and \(A\cap G=\varnothing\).  The set \(A\) depends on finitely many coordinates, so (i) holds above some stage.

For the factorisation, consider one output coordinate \(\xi<\omega_1\).  The map \(x\mapsto H(x)(\xi)\) is a continuous \(\{0,1\}\)-valued function on \(F\).  Its \(1\)-set is clopen in \(F\); since \(F\) is closed in the zero-dimensional compactum \(T\), this relative clopen set extends to a clopen subset of \(T\).  Every clopen subset of \(T\subseteq2^{\omega_1}\) depends on finitely many coordinates, by compactness and the cylinder base.  Hence this coordinate of \(H\) depends on finitely many input coordinates.  Do the same for all \(\xi\) and also for \(H^{-1}\).  Closing a countable ordinal under all the resulting finite dependency maps gives a club \(C\).  At each \(\alpha\in C\), both \(H\) and \(H^{-1}\) factor through \(\pi_\alpha\), and the factored maps are inverse homeomorphisms.

Finally, if \(F_\alpha\) were finite cofinally often, then by monotonicity the finite cardinalities would stabilise on a tail; the inverse limit over that tail would be finite, contradicting non-metrisability of \(F\).  Equivalently, since \(F\ne\varnothing\), once some countable projection is infinite, all later projections are infinite; and if all countable projections were finite with unbounded finite sizes, a countable limit of the stages where the sizes increase would already have infinite projection.
\end{proof}

Fix once and for all an enumeration \((C_\xi)_{\xi<\omega_1}\) of the finite-coordinate clopen cylinders of \(2^{\omega_1}\), an enumeration of ordered pairs of such cylinders, and a recursive pairing scheme for three kinds of data: the trace of the first closed set, the trace of the second closed set, and the graph.  We arrange the enumerations so that there is a club \(C_{\mathrm{cyl}}\subseteq\omega_1\) with the following coherence property: for every \(\alpha\in C_{\mathrm{cyl}}\), the initial segments of the enumerations list exactly the cylinders, and the ordered pairs of cylinders, whose coordinate supports are contained in \(\alpha\).  This is obtained by closing \(\alpha\) under the fixed coding maps and their partial inverses.

For \(\alpha\in C_{\mathrm{cyl}}\) and \(\operatorname{supp}C_\xi\subseteq\alpha\), let \(C_\xi^\alpha\) denote the trace in \(K_\alpha\) of the finite condition defining \(C_\xi\).  The family of these traces is a canonical clopen base of \(K_\alpha\).  A closed set \(F\subseteq K_\alpha\) is coded by the indices \(\xi\) for which \(F\cap C_\xi^\alpha\neq\varnothing\); a closed graph in \(K_\alpha\times K_\alpha\) is coded analogously by the canonical product traces.  The decoding at stage \(\alpha\) is internal to the already constructed compactum \(K_\alpha\): a code is accepted only if it defines two disjoint infinite closed sets and a closed relation which is the graph of a homeomorphism between them.

\begin{lemma}\label{lem:canonical-code}
For every final triple \((F,G,H)\), where \(F,G\subseteq T\) are closed and \(H:F\to G\) is a homeomorphism, there are a global code \(A\subseteq\omega_1\) and a club \(C_{\mathrm{code}}\subseteq\omega_1\) such that, whenever \(\alpha\in C_{\mathrm{code}}\) and \(H\) reflects to a homeomorphism \(H_\alpha:F_\alpha\to G_\alpha\), the initial segment \(A\cap\alpha\) is exactly the canonical stage-\(\alpha\) code of \((F_\alpha,G_\alpha,H_\alpha)\).
\end{lemma}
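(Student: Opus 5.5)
The plan is to define the global code $A$ directly from the final triple, using the fixed enumerations, so that its initial segments automatically read off the stage-$\alpha$ traces. Via the recursive pairing scheme, I will identify $\omega_1$ with three disjoint copies indexed by cylinders, plus one copy indexed by ordered pairs of cylinders. Put $\xi$ into the first part of $A$ iff $F\cap C_\xi\neq\varnothing$, and into the second part iff $G\cap C_\xi\neq\varnothing$. For the pair index $\zeta$ coding $(C_\xi,C_{\xi'})$, put $\zeta$ into the graph part iff $\operatorname{graph}(H)\cap(C_\xi\times C_{\xi'})\neq\varnothing$. Here the cylinders are read in $2^{\omega_1}$ and intersected with $T$.

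The key observation is a trace identity. Fix $\alpha$ and a cylinder $C_\xi$ with $\operatorname{supp}C_\xi\subseteq\alpha$. Then $C_\xi\cap T=\pi_\alpha^{-1}(C_\xi^\alpha)$. Hence $F\cap C_\xi\neq\varnothing$ iff $F_\alpha\cap C_\xi^\alpha\neq\varnothing$, since $F_\alpha=\pi_\alpha[F]$.

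For the graph the same argument works, provided $\pi_\alpha H=H_\alpha\pi_\alpha$. In that case $(\pi_\alpha\times\pi_\alpha)[\operatorname{graph}H]=\operatorname{graph}H_\alpha$. Indeed, the image of $(x,H(x))$ is $(\pi_\alpha x,H_\alpha\pi_\alpha x)$, and every point of $F_\alpha$ is of the form $\pi_\alpha x$. Therefore $\operatorname{graph}H$ meets $C_\xi\times C_{\xi'}$ iff $\operatorname{graph}H_\alpha$ meets $C_\xi^\alpha\times C_{\xi'}^\alpha$.

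Next I take $C_{\mathrm{code}}$ to be $C_{\mathrm{cyl}}$, intersected with the club of ordinals closed under the pairing scheme, so that the three parts of $A\cap\alpha$ correspond to indices below $\alpha$. For $\alpha\in C_{\mathrm{code}}$, coherence of the enumerations says that the indices below $\alpha$ are exactly those of the cylinders (resp.\ pairs) supported in $\alpha$. So $A\cap\alpha$ is precisely the set of indices recording which canonical stage-$\alpha$ basic sets meet $F_\alpha$, $G_\alpha$ and $\operatorname{graph}H_\alpha$. This is by definition the canonical stage-$\alpha$ code of $(F_\alpha,G_\alpha,H_\alpha)$.

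One point to verify is that this code genuinely determines closed sets. A closed set in the zero-dimensional metrisable $K_\alpha$ is the set of points all of whose basic neighbourhoods meet it, so the code is faithful. This is needed only for the later decoding, not for the identity $A\cap\alpha=\text{code}$.

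The main obstacle I expect is bookkeeping rather than topology. First, the canonical stage-$\alpha$ code must be read in exactly the same pairing format as the three parts of $A$. Second, the club must make the ``initial segment'' of each part coincide with the $\alpha$-supported objects. I would handle both by building $C_{\mathrm{code}}$ as the set of limit $\alpha\in C_{\mathrm{cyl}}$ closed under the pairing function and its inverse. This is a club, since it is the intersection of two clubs.

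Note that the hypothesis ``$H$ reflects at $\alpha$'' is used only in the graph clause. This is consistent with the statement, which quantifies over such $\alpha$. Lemma~\ref{lem:reflection} guarantees that there are club many of them, but that fact is not needed here.
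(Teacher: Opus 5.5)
Your proposal is correct and follows the paper's proof essentially step for step. It uses the same three-part global code (cylinders meeting $F$, cylinders meeting $G$, product cylinders meeting the graph of $H$), the same trace identity with $\pi_\alpha H=H_\alpha\pi_\alpha$ invoked only for the graph clause, and the same club $C_{\mathrm{code}}$ obtained from $C_{\mathrm{cyl}}$ by adding closure under the pairing maps; the only slip is wording, since you need two cylinder-indexed parts plus one pair-indexed part, not three plus one.
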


\begin{proof}
Use the three reserved parts of the pairing scheme to form a single set \(A\subseteq\omega_1\): record whether each canonical cylinder meets \(F\), whether it meets \(G\), and whether each canonical product cylinder meets the graph of \(H\).  Let \(C_{\mathrm{code}}\) be the intersection of \(C_{\mathrm{cyl}}\) with the club of ordinals closed under the pairing maps and their partial inverses.

Fix \(\alpha\in C_{\mathrm{code}}\) at which \(H\) reflects.  For every cylinder whose support is contained in \(\alpha\), its trace meets \(F_\alpha\) if and only if its full inverse image meets \(F\); the same holds for \(G\), and, in the product, for the graph of \(H\) and the graph of \(H_\alpha\).  The coherence of the enumerations therefore implies that \(A\cap\alpha\) is exactly the canonical stage-\(\alpha\) code of \((F_\alpha,G_\alpha,H_\alpha)\).
\end{proof}

\begin{lemma}\label{lem:diamond-guess}
Assume \(\diamondsuit(S)\).  There is a diamond sequence on \(S\) which, for every final homeomorphism \(H:F\to G\) between disjoint closed subspaces of \(T\), guesses the trace \((F_\alpha,G_\alpha,H_\alpha)\) for stationarily many \(\alpha\) belonging to the club of Lemma \ref{lem:reflection}.
\end{lemma}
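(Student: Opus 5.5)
We take the given \(\diamondsuit(S)\)-sequence \((D_\alpha)_{\alpha\in S}\) unchanged and combine it with Lemma~\ref{lem:canonical-code} and Lemma~\ref{lem:reflection}. At stage \(\alpha\in S\) the construction reads \(D_\alpha\) through the canonical stage-\(\alpha\) decoding. Two facts make this legitimate. First, the decoding uses only \(K_\alpha\), which is already built when stage \(\alpha\) is reached. Second, whether \(D_\alpha\) is \emph{accepted}, that is, whether it codes two disjoint infinite closed sets and the graph of a homeomorphism between them, is decided inside \(K_\alpha\). So no knowledge of \(T\) is needed to use the guesses. Only the conclusion refers to \(T\), and for that we only need the guessing property of \(\diamondsuit(S)\), which holds for every \(A\subseteq\omega_1\), including codes of objects defined after the construction.

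Now fix a final triple \((F,G,H)\), with \(H:F\to G\) a homeomorphism between disjoint closed subspaces of \(T\).
\begin{enumerate}
\item Lemma~\ref{lem:canonical-code} gives a global code \(A\subseteq\omega_1\) and a club \(C_{\mathrm{code}}\).
\item Lemma~\ref{lem:reflection} gives a club \(C\) on which \(F_\alpha\cap G_\alpha=\varnothing\) and \(H\) reflects to a homeomorphism \(H_\alpha\) with \(\pi_\alpha H=H_\alpha\pi_\alpha\).
\item If \(F\) is non-metrisable, clause (iii) of Lemma~\ref{lem:reflection} gives a tail \([\alpha_0,\omega_1)\) on which \(F_\alpha\) is infinite. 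Then \(G_\alpha=H_\alpha[F_\alpha]\) is infinite there too.
\end{enumerate}
Put \(E=C\cap C_{\mathrm{code}}\setminus\alpha_0\). This is a club, as an intersection of two clubs minus a countable initial segment. By \(\diamondsuit(S)\), the set \(\{\alpha\in S: D_\alpha=A\cap\alpha\}\) is stationary. Its intersection with the club \(E\) is therefore still stationary.

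For each \(\alpha\) in that intersection, Lemma~\ref{lem:canonical-code} says that \(D_\alpha=A\cap\alpha\) is exactly the canonical code of \((F_\alpha,G_\alpha,H_\alpha)\). By the choice of \(E\), this code is accepted: the two sets are disjoint and infinite, and the relation is a homeomorphism graph. So the stage-\(\alpha\) guess decodes to the trace \((F_\alpha,G_\alpha,H_\alpha)\), and \(\alpha\) lies in the club of Lemma~\ref{lem:reflection}, as the statement requires.

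The main obstacle is to make the code of \(T\)'s objects agree with a code read at stage \(\alpha\) without circularity. Lemma~\ref{lem:canonical-code} only compares \(A\cap\alpha\) with traces in \(K_\alpha\), and \(K_\alpha\) is fixed once stage \(\alpha\) is passed, so no circularity arises. The remaining point is the coherence club \(C_{\mathrm{cyl}}\): we must check that "trace meets \(F_\alpha\)" is equivalent to "preimage meets \(F\)". This equivalence uses only surjectivity of \(\pi_\alpha\) on \(F\), that is, \(F_\alpha=\pi_\alpha[F]\).
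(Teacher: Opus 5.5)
Your proof is correct and follows the paper's argument. As in the paper, you take the code \(A\) and the club \(C_{\mathrm{code}}\) from Lemma~\ref{lem:canonical-code}, intersect \(C_{\mathrm{code}}\) with the reflection club of Lemma~\ref{lem:reflection}, and meet the result with the stationary set of \(\alpha\in S\) where \(D_\alpha=A\cap\alpha\). Your extra tail \([\alpha_0,\omega_1)\), which ensures that the guessed code is accepted, is a harmless addition that applies only to non-metrisable \(F\); the paper handles this point instead inside the proof of Theorem~\ref{thm:no-homeomorphic-disjoint}.
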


\begin{proof}
Let \(A\subseteq\omega_1\) and \(C_{\mathrm{code}}\) be given by Lemma~\ref{lem:canonical-code}.  Intersect \(C_{\mathrm{code}}\) with the reflection club of Lemma~\ref{lem:reflection}.  For every \(\alpha\) in this club, the initial segment \(A\cap\alpha\) decodes exactly the projected closed sets \(F_\alpha,G_\alpha\) and the projected closed graph of \(H_\alpha\).  By \(\diamondsuit(S)\), the set of \(\alpha\in S\) for which the diamond sequence guesses \(A\cap\alpha\) is stationary.  Its intersection with the combined club is non-empty, indeed stationary.
\end{proof}

\section{Compatibility with Talagrand's induction}\label{sec:compatibility}

For reference, we spell out the bookkeeping conditions from Talagrand's construction in the notation used in this paper.  This subsection is not a new argument; it is an audit trail showing exactly where the diagonal successor step differs from the original successor step and why the induction still closes.  The constants and inequalities in the finite requirements \((a)\)--\((h)\) below are copied from Talagrand's successor step; only the notation has been changed.

For \(\omega\leqslant\theta<\alpha\) and \(\delta<\alpha\), Talagrand constructs sets \(I(\theta,\alpha)\), \(J(\theta,\delta,
\alpha)\), and \(N_0(\theta,\delta),N_1(\theta,\delta)\subseteq\omega\) satisfying:
\begin{enumerate}[label=\textup{(\Alph*)},leftmargin=2.6em]
\item \(J(\theta,\delta,\alpha)\subseteq I(\theta,\alpha)\).
\item \(N_0(\theta,\delta)\cap N_1(\theta,\delta)=\varnothing\).
\item \(J(\theta,\delta,\alpha)\cap N_e(\theta,\delta)\) is infinite for \(e=0,1\).
\item If \(\alpha'\leqslant\alpha\) and \(\delta<\alpha'\), then
\[
        J(\theta,\delta,\alpha)\setminus J(\theta,\delta,\alpha')
        \quad\text{and}\quad
        I(\theta,\alpha)\setminus I(\theta,\alpha')
\]
are finite whenever both sides are defined.
\item If \(\omega\leqslant\theta\leqslant\alpha'<\alpha\), then for all sufficiently large \(n\in I(\theta,\alpha)\),
\[
        t,t'\in V_n(\theta,\alpha),\quad
        \pi_\theta(t)=\pi_\theta(t')
        \quad\Longrightarrow\quad
        t(\alpha')=t'(\alpha').
\]
\item For every choice \(\lambda_n\in B_n(\theta,\delta,
\alpha)\), the weak-star limit \(\lim_{n\to J(\theta,\delta,
\alpha)}\lambda_n\) exists.
\item If \(\alpha=\alpha'+1\) and \(\eta(\alpha')=(\theta,
\delta)\), then for every choice \(\lambda_n\in B_n(\theta,
\delta,
\alpha)\), both sets
\[
        \{n:\lambda_n(Z_{\alpha'}\times\{1\})\geqslant 1/6\},
        \qquad
        \{n:\lambda_n(Z_{\alpha'}\times\{1\})=0\}
\]
are infinite.
\end{enumerate}

At a successor stage \(\alpha+1\), write \(\eta(\alpha)=(\theta,
\delta^*)\).  Enumerate \([\omega,
\alpha]\) as \((\theta_i)_{i\in I}\) with \(\theta_0=\alpha\).  Let \(\rho:\omega\to\alpha\), \(q\mapsto\rho_q\), be a surjection such that every fibre meets the even and odd integers infinitely often.  Talagrand chooses clopen sets \(H_q\) and integers \(n_q,m_{i,q}\) satisfying the following finite requirements.  Here \(\varepsilon(q)=0\) for even \(q\), \(\varepsilon(q)=1\) for odd \(q\), \(P(q)\) is the finite set of coordinates generated by \(\rho_j\), \(j\leqslant q\), and by the clopen sets already chosen, and \(\mu^i_\delta\) denotes the relevant weak-star cluster point supplied by the induction.
\begin{enumerate}[label=\textup{(\alph*)},leftmargin=2.6em]
\item \(m_{i,q+1}>m_{i,q}\) for every \(i\in I\).
\item For every \(i\in I\setminus\{0\}\),
\[
        m_{i,q}\in J(\theta_i,\rho_q,\alpha)
        \cap N_{\varepsilon(q)}(\theta_i,\rho_q).
\]
\item The common variation-limit measure \(\nu\) for the scheduled target sequence satisfies
\[
        \nu\bigl(V_{m_{i,q}}(\theta_i,
\alpha)\bigr)
        \leqslant 2^{-i-q-7}.
\]
\item If \(t,t'\in V_{m_{i,q}}(\theta_i,
\alpha)\) and \(\pi_{\theta_i}(t)=\pi_{\theta_i}(t')\), then \(\pi_{P(q)}(t)=\pi_{P(q)}(t')\).
\item For every \(\lambda\in B_{m_{i,q}}(\theta_i,
\rho_q,
\alpha)\),
\[
        \bigl\|(\pi_{P(q)})_*\lambda-(\pi_{P(q)})_*\mu^i_{\rho_q}\bigr\|
        \leqslant 2^{-q}.
\]
\item \(n_q\) belongs to the selected infinite set \(I_0\), and \(n_{q+1}>n_q\).  In particular the integers \(n_q\) are pairwise distinct.
\item For every \(\lambda\in B_{n_q}(\theta,
\delta^*,
\alpha)\),
\[
        |\lambda|\bigl(V_{m_{i,j}}(\theta_i,
\alpha)\bigr)
        \leqslant 2^{-i-j-6}
        \qquad(i,j<q).
\]
\item If \(W_{n_q}\subseteq V_{n_q}(\theta,
\theta)\) is the clopen set with \(\nu_{n_q}(\theta,
\delta^*)(W_{n_q})>1/3\), then
\[
        H_q=\pi_{\theta,
\alpha}^{-1}(W_{n_q})
        \setminus\bigcup_{i,j<q}V_{m_{i,j}}(\theta_i,
\alpha).
\]
\end{enumerate}

Requirement \((b)\) is imposed only for \(i\neq0\).  The distinguished index \(m_{0,q}\) is chosen separately, exactly as in Talagrand's proof.  Requirements \((a)\) and \((c)\) hold for all sufficiently large choices; requirement \((d)\) is automatic because \(\theta_0=\alpha\); and requirement \((e)\) holds for infinitely many choices because
\[
        B_n(\alpha,\rho_q,\alpha)=\{\nu_n(\alpha,\rho_q)\}
\]
and \(\mu^0_{\rho_q}\) is chosen as a weak-star cluster point of the corresponding sequence.  Thus no undefined set \(J(\alpha,\rho_q,\alpha)\) or \(N_{\varepsilon(q)}(\alpha,\rho_q)\) is used.

The diagonal step below changes only one item: for the newly introduced source sequence it adds the strictly increasing indices \(r_q=n_q\), for \(q\) in one fibre of \(\rho\), to the new set \(I(\alpha,
\alpha+1)\).  The following table records the effect on Talagrand's induction.
\begin{center}
\begin{tabular}{p{0.25\textwidth}p{0.65\textwidth}}
\hline
Condition & Reason it survives at a diagonal stage\\
\hline
\((A)\)--\((D)\) for old pairs & The old sets \(J(\theta,
\delta,
\alpha+1)\) and \(I(\theta,
\alpha+1)\) are Talagrand's original ones.\\
New source pair \((\alpha,
\alpha)\) & There is no earlier value of \(I(\alpha,\cdot)\) or \(J(\alpha,
\alpha,
\cdot)\).  The sets \(N_0,N_1\) are chosen as the even and odd pieces of \(\{r_q\}\), so \((B)\) and \((C)\) hold immediately.\\
\((E)\) & For old indices this is Talagrand's proof.  For the added indices \(r_q\), the support \(R_{r_q}\) misses the boundary \(Y_\alpha\cap Z_\alpha\), so the new coordinate is constant on each fibre over \(R_{r_q}\); see Lemma~\ref{lem:parity}.\\
\((F)\) & Old pairs are unchanged.  Every norm-one lift of \(\delta_{x_{r_q}}\) is uniquely \(\delta_{(x_{r_q},0)}\); along the added set \(J(\alpha,\alpha,\alpha+1)=\{r_q\}\), these measures converge to \(\delta_{(x,0)}\).\\
\((G)\) & The target pair \((\alpha,0)\) is treated by Talagrand's ordinary estimate.  The new source pair is an admissible sequence and is scheduled later by the ordinary scheduler.\\
\hline
\end{tabular}
\end{center}

Thus the additional source sequence is compatible with the finite successor requirements.  The verification of \((A)\)--\((G)\) is not an appeal to an unchanged proof in the presence of hidden changes: the only changed objects are listed above, and their required properties are checked directly in Lemma~\ref{lem:paired-successor-step} and Lemma~\ref{lem:source-filter}.

\section{The persistent Boolean anti-extension lemma}\label{sec:anti-extension}

This section proves the local diagonalisation used at the diamond stages.  The proof is written in Talagrand's notation, but the idea is simple: force a clopen coordinate to alternate on the images \(h(x_n)\), while Talagrand's filter condition forbids any final clopen set from alternating on the corresponding source points \(x_n\).

Let \(\alpha>\omega\) be a stage at which \(K_\alpha\) is constructed.  Suppose we are given disjoint infinite closed sets \(F,G\subseteq K_\alpha\) and a homeomorphism
\[
        h:F\longrightarrow G.
\]
Choose a non-isolated point \(x\in F\), a sequence of distinct points \(x_n\in F\setminus\{x\}\) with \(x_n\to x\), and put
\[
        y=h(x),\qquad y_n=h(x_n).
\]
Then \(y_n\to y\), and the \(y_n\)'s are distinct.  Choose disjoint clopen neighbourhoods \(P,Q\subseteq K_\alpha\) of \(x\) and \(y\), respectively, and pass to a subsequence so that \(x_n\in P\), \(y_n\in Q\) for all \(n\).  Choose pairwise disjoint clopen sets
\[
        P_n\subseteq P,\qquad Q_n\subseteq Q,
\]
with \(x_n\in P_n\), \(y_n\in Q_n\), and with \(x,y\notin P_n\cup Q_n\).  Put
\[
        R_n=P_n\cup Q_n .
\]

At the bookkeeping level, reserve the target index \(0\) and the source index \(\alpha\).  Set
\[
        \eta(\alpha)=(\alpha,0),
        \qquad U_n(\alpha,0)=R_n,
\]
so that \(V_n(\alpha,\alpha)=R_n\).  Put
\[
        \nu_n(\alpha,0)=\delta_{y_n}
        \quad\text{and}\quad
        \nu_n(\alpha,\alpha)=\delta_{x_n}.
\]
Both sequences are admissible in the sense of \eqref{eq:admissible}.  In Talagrand's successor construction for \(K_{\alpha+1}\), take
\[
        W_n=Q_n\subseteq R_n;
\]
then \(\nu_n(\alpha,0)(W_n)=1\).  Let \((\rho_q)_{q\in\omega}\) be the usual surjection from \(\omega\) onto \(\alpha\), with each fibre meeting the even and odd integers infinitely often.  Let
\[
        Q^*=\{q\in\omega:\rho_q=0\}.
\]

\begin{lemma}\label{lem:paired-successor-step}
The successor construction can be carried out so that Talagrand's finite requirements \((a)\)--\((h)\) of Section~\ref{sec:compatibility} hold.  The integers \(n_q\) are strictly increasing.  If, for \(q\in Q^*\), we put
\[
        r_q=n_q,
\]
then one may additionally require
\[
        \{r_q:q\in Q^*\}\subseteq I(\alpha,\alpha+1)
\]
and set
\[
        J(\alpha,\alpha,\alpha+1)=\{r_q:q\in Q^*\}
\]
for the source sequence \(\nu_n(\alpha,\alpha)=\delta_{x_n}\), while Talagrand's conditions \((A)\)--\((G)\) remain valid.
\end{lemma}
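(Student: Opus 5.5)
We run Talagrand's successor step at stage \(\alpha+1\) unchanged, with the target pair \(\eta(\alpha)=(\alpha,0)\), the sequence \(\nu_n(\alpha,0)=\delta_{y_n}\) and \(W_n=Q_n\). We then check that the extra bookkeeping attached to the source sequence \(\nu_n(\alpha,\alpha)=\delta_{x_n}\) costs nothing.

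\textbf{Step 1: the recursion.} Build \(H_q,n_q,m_{i,q}\) by recursion on \(q\). At step \(q\), first choose the \(m_{i,q}\) for \(i\neq 0\) subject to \((a)\)--\((e)\). This is Talagrand's argument verbatim, since the old sets \(J,N_e,I\) are untouched. Then choose \(m_{0,q}\) separately, using \(B_n(\alpha,\rho_q,\alpha)=\{\nu_n(\alpha,\rho_q)\}\) and the cluster point \(\mu^0_{\rho_q}\). Finally choose \(n_q\in I_0\) with \(n_q>n_{q-1}\) satisfying \((g)\). This is possible because \((g)\) imposes only finitely many conditions. Here the measures in \(B_{n}(\alpha,0,\alpha)\) reduce to the single Dirac \(\delta_{y_n}\), and the \(y_n\) are distinct points converging to \(y\). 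Their masses on the finitely many fixed clopen sets \(V_{m_{i,j}}(\theta_i,\alpha)\), \(i,j<q\), are therefore \(0\) for large \(n\), unless \(y\) lies in one of these sets. That case is excluded by \((c)\), since the limit measure is \(\nu=\delta_y\) and \(\delta_y(V)\leqslant 2^{-i-q-7}<1\) forces \(y\notin V\). Define \(H_q\) by \((h)\), and put \(Z_\alpha=\overline{\bigcup_q H_q}\), \(Y_\alpha\) as in Talagrand. Strict monotonicity of \(n_q\) is built in by \((f)\).

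\textbf{Step 2: the new pair \((\alpha,\alpha)\).} Set \(r_q=n_q\) for \(q\in Q^*\), put these indices into \(I(\alpha,\alpha+1)\), let \(J(\alpha,\alpha,\alpha+1)=\{r_q:q\in Q^*\}\), and split \(N_0,N_1\) by the parity of \(q\). Since \(Q^*\) meets both parities infinitely often, \((A)\)--\((D)\) hold immediately, and no earlier instances of these sets exist. For \((E)\) we must show that the new coordinate \(t(\alpha)\) is constant on fibres over \(R_{r_q}=P_{r_q}\cup Q_{r_q}\), for all large \(r_q\). This is where we must ensure that \(R_{r_q}\) misses \(Y_\alpha\cap Z_\alpha\), which is the content of Lemma~\ref{lem:parity}. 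On \(Q_{r_q}\) we use \(H_q\cap R_{r_q}=Q_{r_q}\), minus finitely many earlier \(V\)'s. On \(P_{r_q}\) we note that \(P\) is disjoint from \(Q\), and we arrange the \(H_{q'}\) to be chosen inside \(Q\), so that \(P_{r_q}\subseteq Y_\alpha\setminus Z_\alpha\). This gives \(P\cap Z_\alpha=\varnothing\), provided each \(H_{q'}\) is intersected with \(Q\), which preserves \((h)\)'s mass estimate because \(W_n=Q_n\subseteq Q\).

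\textbf{Step 3: \((F)\) and \((G)\).} Since \(x_{r_q}\notin Z_\alpha\), the only norm-one lift of \(\delta_{x_{r_q}}\) is \(\delta_{(x_{r_q},0)}\). These lifts converge to \(\delta_{(x,0)}\), because \(x\in P\) also lies off \(Z_\alpha\), so \((F)\) holds for the new pair. Condition \((G)\) for the target pair is Talagrand's estimate unchanged. The new source pair is admissible and is handled later by the cofinal scheduler.

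\textbf{Main obstacle.} The main difficulty I expect is the boundary control in Step 2. One must make sure that Talagrand's choice of \(Z_\alpha\) as a closure of the \(H_q\) does not accumulate onto the paired source sets \(P_{r_q}\). Forcing all \(H_q\subseteq Q\), together with \(\overline{Q}\cap P=\varnothing\) (both are clopen), is what I would try first.
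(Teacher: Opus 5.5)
Your proposal is correct and follows the paper's proof. Talagrand's finite recursion for the target sequence \(\delta_{y_n}\) is run unchanged, the indices \(r_q=n_q\), \(q\in Q^*\), are recorded as the source \(J\)-set with \(N_0,N_1\) split by parity, and \((E)\), \((F)\) are checked from three facts: the \(Q_n\) are pairwise disjoint clopen, \(H_s\subseteq Q_{n_s}\subseteq Q\), and \(P\cap Q=\varnothing\). Your use of \((c)\) with \(\nu=\delta_y\) to show \((g)\) is satisfiable fills in a detail the paper leaves implicit, and intersecting \(H_q\) with \(Q\) is already automatic.

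One slip needs fixing: Talagrand's set is \(Z_\alpha=\overline{\bigcup_{q\text{ even}}H_q}\), not the closure of all \(H_q\). With the correct definition, \(Z_\alpha\cap R_{r_q}\) equals \(H_q\) for even \(q\) and is empty for odd \(q\); this uses that \(Q_{r_q}\) misses every \(H_s\) with \(s\neq q\), which you should state explicitly. It is exactly this even/odd split that \((G)\) and Lemma~\ref{lem:parity} rely on.
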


\begin{proof}
Choose the integers \(n_q\) and \(m_{i,q}\) by the finite induction described in Section~\ref{sec:compatibility}.  The added source sequence imposes no requirement on those choices: requirements \((a)\)--\((h)\) concern the target sequence \(\nu_n(\alpha,0)=\delta_{y_n}\).  At each finite step, \(n_q\) is chosen from the same infinite set as in Talagrand's proof.  For \(i>0\), the indices \(m_{i,q}\) are chosen from the same cofinal subsets of
\[
        J(\theta_i,\rho_q,\alpha)
        \cap N_{\varepsilon(q)}(\theta_i,\rho_q)
\]
while satisfying the remaining finite requirements.  The distinguished index \(m_{0,q}\) is chosen separately, again exactly as in Talagrand's proof: requirements \((a)\) and \((c)\) hold for sufficiently large choices, \((d)\) is automatic because \(\theta_0=\alpha\), and \((e)\) holds for infinitely many indices because \(B_n(\alpha,\rho_q,\alpha)=\{\nu_n(\alpha,\rho_q)\}\) and \(\mu^0_{\rho_q}\) is a cluster point of that sequence.

After these choices are made, we merely record the already chosen subsequence \(r_q=n_q\) on the fibre \(Q^*\) as the source \(J\)-set.  Requirement \((f)\) gives \(n_{q+1}>n_q\), hence the indices \(r_q\) are pairwise distinct.  In the present target case \(B_n(\alpha,0,\alpha)=\{\delta_{y_n}\}\), and condition \((g)\) says precisely that, when \(n_q\) is chosen, the point \(y_{n_q}\) avoids all the finitely many previously selected supports \(V_{m_{i,j}}(\theta_i,\alpha)\), \(i,j<q\).  Hence \(y_{n_q}\in H_q\) below.

The only extra move is to enlarge \(I(\alpha,\alpha+1)\) by the set \(\{r_q:q\in Q^*\}\).  This does not affect the old pairs \((\alpha,\delta)\), \(\delta<\alpha\), because their sets \(J(\alpha,\delta,\alpha+1)\) are still the original Talagrand sets \(\{m_{0,q}:\rho_q=\delta\}\).  For the new pair \((\alpha,\alpha)\) there is no previous value of \(I(\alpha,\cdot)\), so condition \((D)\) imposes no backwards compatibility at this stage.

It remains to check condition \((E)\) for the added indices \(r_q\).  For those indices
\[
        V_{r_q}(\alpha,\alpha)=R_{r_q}=P_{r_q}\cup Q_{r_q}.
\]
As the sets \(P_n\) and \(Q_n\) are pairwise disjoint and clopen, and as each \(H_s\) is contained in \(Q_{n_s}\), we shall have
\[
        Z_\alpha\cap R_{r_q}=H_q\quad(q\in Q^*\text{ even}),
        \qquad
        Z_\alpha\cap R_{r_q}=\varnothing\quad(q\in Q^*\text{ odd}).
\]
In the even case \(H_q\) is clopen in \(K_\alpha\), hence no point of \(R_{r_q}\) lies in the boundary \(Y_\alpha\cap Z_\alpha\); in the odd case this is immediate.  Therefore the new coordinate is constant on each fibre of the projection over \(R_{r_q}\).  Since equality of the \(K_\alpha\)-projection already fixes all old coordinates, condition \((E)\) holds for the added indices.

For the source sequence \(\nu_n(\alpha,\alpha)=\delta_{x_n}\), the chosen set
\[
        J(\alpha,\alpha,\alpha+1)=\{r_q:q\in Q^*\}
\]
is legitimate for condition \((F)\).  Indeed all \(H_s\)'s lie in the clopen set \(Q\), while \(x\) and all \(x_n\)'s lie in the disjoint clopen set \(P\).  Hence the fibre of \(K_{\alpha+1}\to K_\alpha\) over each \(x_n\), and over \(x\), is a singleton on the \(0\)-side.

We also have to check uniqueness for arbitrary signed lifts.  Let
\[
        \lambda\in B_{r_q}(\alpha,\alpha,\alpha+1).
\]
Then
\[
        \bigl\|(\pi_{\alpha,\alpha+1})_*\lambda\bigr\|
        =\|\delta_{x_{r_q}}\|=1=\|\lambda\|.
\]
Since \(|(\pi_{\alpha,\alpha+1})_*\lambda|\leqslant(\pi_{\alpha,\alpha+1})_*|\lambda|\) and both positive measures have total mass one, they are equal.  Thus
\[
        (\pi_{\alpha,\alpha+1})_*|\lambda|=\delta_{x_{r_q}},
\]
so \(|\lambda|\) is carried by the singleton fibre \(\{(x_{r_q},0)\}\).  The identity \((\pi_{\alpha,\alpha+1})_*\lambda=\delta_{x_{r_q}}\) then forces
\[
        \lambda=\delta_{(x_{r_q},0)}.
\]
Consequently every possible source lift is unique, and
\[
        \delta_{(x_{r_q},0)}\longrightarrow \delta_{(x,0)}
        \qquad(q\in Q^*,\ q\to\infty).
\]
Finally set
\[
        N_0(\alpha,\alpha)=\{r_q:q\in Q^*,\ q\text{ even}\},
        \qquad
        N_1(\alpha,\alpha)=\{r_q:q\in Q^*,\ q\text{ odd}\}.
\]
They are disjoint and both meet \(J(\alpha,\alpha,\alpha+1)\) infinitely, because the fibre \(Q^*\) meets the even and odd integers infinitely often.  This gives \((B)\) and \((C)\) for the new pair, and all remaining induction requirements are those of Talagrand's original successor step.
\end{proof}

Now define, as in Talagrand's construction,
\[
        H_q=Q_{n_q}\setminus \bigcup_{i,j<q}V_{m_{i,j}}(\theta_i,\alpha),
        \qquad
        Z_\alpha=\overline{\bigcup_{q\text{ even}}H_q},
        \qquad
        Y_\alpha=\overline{K_\alpha\setminus Z_\alpha}.
\]
Thus \(Y_\alpha\) and \(Z_\alpha\) are closed and \(Y_\alpha\cup Z_\alpha=K_\alpha\), so the successor space \(K_{\alpha+1}=(Y_\alpha\times\{0\})\cup(Z_\alpha\times\{1\})\) is compact.
Here \(\theta_0=\alpha\), and \((\theta_i)_{i\in I}\) enumerates \([\omega,\alpha]\) with \(0\in I\).

\begin{lemma}\label{lem:parity}
For each \(q\in Q^*\) the following hold:
\[
        y_{r_q}\in Z_\alpha\setminus Y_\alpha
        \quad\text{if }q\text{ is even},
\]
whereas
\[
        y_{r_q}\in Y_\alpha\setminus Z_\alpha
        \quad\text{if }q\text{ is odd}.
\]
Moreover \(x_{r_q}\in Y_\alpha\setminus Z_\alpha\) for every \(q\in Q^*\), and \(x\in Y_\alpha\setminus Z_\alpha\).
\end{lemma}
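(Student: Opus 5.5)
The plan is to compute interiors and closures directly from the definitions \(Z_\alpha=\overline{\bigcup_{q\text{ even}}H_q}\) and \(Y_\alpha=\overline{K_\alpha\setminus Z_\alpha}\), using only the disjointness of the clopen sets \(P_n,Q_n\), the containment \(H_s\subseteq Q_{n_s}\), and the fact \(y_{n_q}\in H_q\) established in the proof of Lemma~\ref{lem:paired-successor-step}. The key observation is that a point lies in \(Z_\alpha\setminus Y_\alpha\) exactly when it lies in the interior of \(Z_\alpha\), and in \(Y_\alpha\setminus Z_\alpha\) exactly when it has a neighbourhood missing \(Z_\alpha\). So everything reduces to showing that certain clopen sets are contained in, or disjoint from, \(Z_\alpha\).

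\emph{Step 1 (the even case).} Fix \(q\in Q^*\) even. Then \(H_q\) is clopen, being a clopen set minus finitely many clopen sets \(V_{m_{i,j}}(\theta_i,\alpha)\), and \(H_q\subseteq Z_\alpha\). Hence \(H_q\) is an open subset of \(Z_\alpha\), so \(H_q\cap(K_\alpha\setminus Z_\alpha)=\varnothing\). Since \(H_q\) is open, it also misses the closure \(Y_\alpha\). As \(y_{r_q}=y_{n_q}\in H_q\), we get \(y_{r_q}\in Z_\alpha\setminus Y_\alpha\).

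\emph{Step 2 (the odd case and the source points).} First check that \(Q_{n_q}\cap Z_\alpha=\varnothing\) for odd \(q\). The sets \(H_s\), for even \(s\), lie in \(Q_{n_s}\), and the \(Q_n\) are pairwise disjoint clopen sets. Because the \(n_s\) are strictly increasing, \(n_s\neq n_q\) for \(s\neq q\), so \(\bigcup_{s\text{ even}}H_s\) misses the clopen set \(Q_{n_q}\); hence so does its closure \(Z_\alpha\). Thus \(Q_{n_q}\) is an open set inside \(K_\alpha\setminus Z_\alpha\subseteq Y_\alpha\), and \(y_{r_q}\in Q_{n_q}\) gives \(y_{r_q}\in Y_\alpha\setminus Z_\alpha\). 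The same argument applies to the source points. All \(H_s\) lie in \(Q\), which is clopen and disjoint from \(P\), so \(Z_\alpha\subseteq Q\). Every \(x_{r_q}\) and the point \(x\) lie in the open set \(P\subseteq K_\alpha\setminus Z_\alpha\subseteq Y_\alpha\), hence in \(Y_\alpha\setminus Z_\alpha\).

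\emph{Main obstacle.} I expect the only delicate point to be the membership \(y_{n_q}\in H_q\), i.e.\ that removing the earlier supports \(V_{m_{i,j}}(\theta_i,\alpha)\), \(i,j<q\), does not remove \(y_{n_q}\). This comes from requirement \((g)\): applied to the unique lift \(\delta_{y_{n_q}}\in B_{n_q}(\alpha,0,\alpha)\), it gives \(|\delta_{y_{n_q}}|(V_{m_{i,j}})\leqslant 2^{-i-j-6}<1\), so the point lies outside each such set. This was already recorded in Lemma~\ref{lem:paired-successor-step}, which I would cite rather than reprove. I would also remark that \(W_{n_q}=Q_{n_q}\) and \(\theta=\alpha\), so that \(\pi_{\theta,\alpha}^{-1}(W_{n_q})=Q_{n_q}\) and the two definitions of \(H_q\) agree.
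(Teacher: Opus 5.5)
Your proposal is correct and follows essentially the same route as the paper: \(y_{r_q}\in H_q\) from requirement \((g)\), then \(y_{r_q}\in\operatorname{int}Z_\alpha\) in the even case. In the odd case the clopen \(Q_{n_q}\) misses the even union and hence its closure. Finally \(Z_\alpha\subseteq Q\) with \(Q\cap P=\varnothing\) handles \(x\) and the \(x_{r_q}\). Your extra remarks spell out details the paper leaves implicit: \(n_s\neq n_q\) by strict monotonicity, and \(\theta=\alpha\), \(W_n=Q_n\) reconciling the two definitions of \(H_q\).
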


\begin{proof}
By definition \(r_q=n_q\) for \(q\in Q^*\).  The avoidance requirement \((g)\) gives \(y_{r_q}=y_{n_q}\in H_q\).  If \(q\) is even, then \(H_q\) is one of the clopen pieces used to define \(Z_\alpha\), so \(y_{r_q}\in \operatorname{int}Z_\alpha\), hence \(y_{r_q}\notin Y_\alpha\).  If \(q\) is odd, the set \(H_q\) is not used in the even union.  For every even \(s\), \(H_s\subseteq Q_{n_s}\), and \(Q_{n_s}\cap Q_{n_q}=\varnothing\); since \(Q_{n_q}\) is clopen, it also misses the closure of the even union.  Hence the clopen neighbourhood \(Q_{r_q}\) of \(y_{r_q}\) misses \(Z_\alpha\), so \(y_{r_q}\notin Z_\alpha\).

Finally, all \(H_q\)'s are contained in \(Q\), and \(Q\) is clopen and disjoint from \(P\).  Thus \(Z_\alpha\subseteq Q\), while \(x\) and all \(x_{r_q}\)'s lie in \(P\).  Hence they belong to \(Y_\alpha\setminus Z_\alpha\).
\end{proof}

Let
\[
        c_\alpha=\{t\in K_{\alpha+1}:t(\alpha)=1\}=Z_\alpha\times\{1\}.
\]
By Lemma \ref{lem:parity}, every lift of \(y_{r_q}\) to \(K_{\alpha+1}\) lies in \(c_\alpha\) if and only if \(q\) is even; every lift of \(x_{r_q}\) lies outside \(c_\alpha\).

\begin{lemma}\label{lem:source-filter}
Let \(T\) be any later continuation of the construction, and let \(\fa_\alpha\) be the filter on \(\omega\) generated by the sets
\[
        J(\alpha,\alpha,\beta)\setminus F,
        \qquad \beta>\alpha,\quad F\in[\omega]^{<\omega}.
\]
Equivalently, \(\fa_\alpha\) is generated by the sets \(J(\alpha,\alpha,\beta)\), \(\beta>\alpha\), together with the Fr\'echet filter.  Then \(\fa_\alpha\) is not an ultrafilter, and every member of \(\fa_\alpha\) meets both \(N_0(\alpha,\alpha)\) and \(N_1(\alpha,\alpha)\) infinitely.  Moreover, if \(A\subseteq T\) is clopen and \(\widehat x_n\in T\) satisfy \(\pi_\alpha(\widehat x_n)=x_n\), then the \(0\)-\(1\) sequence \((\indicator_A(\widehat x_n))_{n\in\omega}\) has an \(\fa_\alpha\)-limit.
\end{lemma}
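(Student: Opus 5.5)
The lemma has three parts: a filter-theoretic statement about \(\fa_\alpha\), and a convergence statement for clopen indicators. The filter part comes straight from Talagrand's conditions \((B)\)--\((D)\) for the new pair \((\alpha,\alpha)\). The convergence part comes from condition \((F)\) applied to point masses.

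\emph{Filter structure.} The two descriptions of \(\fa_\alpha\) agree because \(J\setminus F\) is the intersection of \(J\) with a cofinite set. By condition \((D)\) (with \(\delta=\alpha<\alpha+1\leqslant\beta'\leqslant\beta\)), the sets \(J(\alpha,\alpha,\beta)\) are almost decreasing in \(\beta\). Hence a finite intersection of generators \(J(\alpha,\alpha,\beta_k)\setminus F_k\) almost contains \(J(\alpha,\alpha,\beta)\) with \(\beta=\max_k\beta_k\). So every member of \(\fa_\alpha\) almost contains some single \(J(\alpha,\alpha,\beta)\). By \((C)\), that set meets each of \(N_0(\alpha,\alpha)\) and \(N_1(\alpha,\alpha)\) infinitely, so every member of \(\fa_\alpha\) does too. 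Since \(N_0\cap N_1=\varnothing\) by \((B)\), neither \(N_0\) nor its complement lies in \(\fa_\alpha\), because \(\omega\setminus N_0\supseteq N_1\) would then need to avoid \(N_0\)-membership symmetrically. This shows \(\fa_\alpha\) is not an ultrafilter. The sets \(N_e(\alpha,\alpha)\) are fixed at stage \(\alpha+1\) by Lemma~\ref{lem:paired-successor-step} and never change later. I will also check that the continuation keeps \((B)\)--\((D)\) for this pair; this is part of the standing assumption that \((A)\)--\((G)\) hold throughout.

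\emph{Limits of indicators.} Fix a clopen \(A\subseteq T\). As in Step~1 of Lemma~\ref{lem:splitter}, \(A=\pi_\beta^{-1}(A^\beta)\) for some countable \(\beta>\alpha\). Consider \(\lambda_n=\delta_{\pi_\beta(\widehat x_n)}\in M(K_\beta)\). These have norm one and push forward to \(\delta_{x_n}=\nu_n(\alpha,\alpha)\) under \(\pi_{\alpha,\beta}\), so \(\lambda_n\in B_n(\alpha,\alpha,\beta)\) for every \(n\). Condition \((F)\) at stage \(\beta\) then gives that \(\lim_{n\to J(\alpha,\alpha,\beta)}\lambda_n\) exists weak-star. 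Testing against the continuous function \(\indicator_{A^\beta}\) shows that \(\indicator_A(\widehat x_n)=\indicator_{A^\beta}(\pi_\beta\widehat x_n)\) converges along \(J(\alpha,\alpha,\beta)\). Since \(J(\alpha,\alpha,\beta)\in\fa_\alpha\), this limit along the Fréchet filter on \(J(\alpha,\alpha,\beta)\) is also an \(\fa_\alpha\)-limit.

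\emph{Main obstacle.} The delicate point is that \((F)\) must hold at every later stage \(\beta\) for the pair \((\alpha,\alpha)\) introduced at a diagonal stage, not only at \(\alpha+1\). Lemma~\ref{lem:paired-successor-step} verifies this only at \(\alpha+1\). For later stages I would appeal to the compatibility audit of Section~\ref{sec:compatibility}: the new pair becomes an ordinary admissible pair, and Talagrand's induction propagates \((D)\) and \((F)\) to all later stages, shrinking \(J\) almost-inclusively. If needed, I would note explicitly that at each later successor step Talagrand's choice of \(J(\theta,\delta,\alpha+1)\) for old pairs, including \((\alpha,\alpha)\), is what guarantees \((F)\).
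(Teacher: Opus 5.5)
Your proposal is correct and follows the paper's proof essentially step for step. Conditions \((B)\)--\((D)\) make every member of \(\fa_\alpha\) almost contain a single \(J(\alpha,\alpha,\beta)\), which meets both \(N_e(\alpha,\alpha)\) infinitely by \((C)\). Condition \((F)\), applied to the point masses \(\delta_{\pi_\beta(\widehat x_n)}\in B_n(\alpha,\alpha,\beta)\) and tested against the clopen trace of \(A\), gives the \(\fa_\alpha\)-limit. Like the paper, you treat \((A)\)--\((G)\) holding for the new pair at all later stages as a standing assumption.

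Only your sentence on non-ultrafilterness is garbled. The clean reason is that \(N_0(\alpha,\alpha)\notin\fa_\alpha\) because it misses \(N_1(\alpha,\alpha)\) by \((B)\), and \(\omega\setminus N_0(\alpha,\alpha)\notin\fa_\alpha\) because it misses \(N_0(\alpha,\alpha)\).
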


\begin{proof}
The first assertion is exactly Talagrand's conditions \((B)\)--\((D)\).  Given finitely many generators, condition \((D)\) gives a later \(J(\alpha,\alpha,\gamma)\) which is contained in their intersection modulo a finite set.  Removing finitely many points does not destroy condition \((C)\), which says that every such later \(J\) meets both \(N_0(\alpha,\alpha)\) and \(N_1(\alpha,\alpha)\) infinitely.

For the second assertion, the clopen set \(A\) depends on some stage \(\beta>\alpha\).  The measures \(\delta_{\pi_\beta(\widehat x_n)}\) belong to \(B_n(\alpha,\alpha,\beta)\).  By condition \((F)\), their weak-star limit along \(J(\alpha,\alpha,\beta)\) exists.  Evaluating at the clopen trace of \(A\) gives convergence of \(\indicator_A(\widehat x_n)\) along that generator of \(\fa_\alpha\).  Since the sequence is \(0\)-\(1\)-valued, one of the sets \(\{n:\widehat x_n\in A\}\) and \(\{n:\widehat x_n\notin A\}\) belongs to \(\fa_\alpha\).
\end{proof}

We now prove the promised persistent anti-extension statement.

\begin{proposition}\label{prop:persistent}
There is no later continuation of the construction and no homeomorphism \(H:\widehat F\to\widehat G\) between closed subspaces of the final compactum \(T\) such that
\[
        \pi_\alpha[\widehat F]=F,
        \qquad
        \pi_\alpha[\widehat G]=G,
        \qquad
        \pi_\alpha H=h\pi_\alpha .
\]
Equivalently, the Boolean isomorphism \(\clop(K_\alpha)/I_G\cong \clop(K_\alpha)/I_F\) induced by \(h\) has no final extension to the traces over \(\widehat G\) and \(\widehat F\).
\end{proposition}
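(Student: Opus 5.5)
Argue by contradiction: suppose $T$ is a later continuation and $H:\widehat F\to\widehat G$ is a homeomorphism with $\pi_\alpha[\widehat F]=F$, $\pi_\alpha[\widehat G]=G$ and $\pi_\alpha H=h\pi_\alpha$. The plan is to build one final clopen set $A\subseteq T$ that alternates on a sequence of lifts of the source points $x_{r_q}$, $q\in Q^*$. This will contradict Lemma~\ref{lem:source-filter}.

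First, the target side. Let $c=\pi_{\alpha+1}^{-1}(c_\alpha)=\{t\in T:t(\alpha)=1\}$, a clopen subset of $T$. For each $n$ choose $\widehat x_n\in\widehat F$ with $\pi_\alpha(\widehat x_n)=x_n$; this is possible because $\pi_\alpha[\widehat F]=F$. Then $\pi_\alpha H(\widehat x_n)=h(x_n)=y_n$, so $H(\widehat x_n)$ is a lift of $y_n$. By Lemma~\ref{lem:parity} and the remark after it, $H(\widehat x_{r_q})\in c$ if and only if $q$ is even, for $q\in Q^*$.

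Second, pull back. The set $c\cap\widehat G$ is relatively clopen in $\widehat G$, so $H^{-1}(c\cap\widehat G)$ is relatively clopen in the closed set $\widehat F$. Because $T$ is a zero-dimensional compactum, a relatively clopen subset of a closed subspace is the trace of a clopen set of $T$. So there is a clopen $A\subseteq T$ with $A\cap\widehat F=H^{-1}(c\cap\widehat G)$. Hence
\[
\indicator_A(\widehat x_{r_q})=\indicator_c(H(\widehat x_{r_q}))=\begin{cases}1,& q\in Q^* \text{ even},\\ 0,& q\in Q^*\text{ odd}.\end{cases}
\]
By construction in Lemma~\ref{lem:paired-successor-step}, this says $\widehat x_n\in A$ for $n\in N_0(\alpha,\alpha)$ and $\widehat x_n\notin A$ for $n\in N_1(\alpha,\alpha)$.

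Third, the contradiction. The points $\widehat x_n$ satisfy $\pi_\alpha(\widehat x_n)=x_n$, so Lemma~\ref{lem:source-filter} applies: one of $\{n:\widehat x_n\in A\}$ and $\{n:\widehat x_n\notin A\}$ belongs to $\fa_\alpha$. Every member of $\fa_\alpha$ meets both $N_0(\alpha,\alpha)$ and $N_1(\alpha,\alpha)$ infinitely. But $\{n:\widehat x_n\in A\}$ misses $N_1(\alpha,\alpha)$, and $\{n:\widehat x_n\notin A\}$ misses $N_0(\alpha,\alpha)$, so neither can lie in $\fa_\alpha$. The Boolean reformulation is the Stone dual of the same statement, since a final extension of the isomorphism would be dual to such an $H$.

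The main obstacle is making sure Lemma~\ref{lem:source-filter} really covers arbitrary lifts $\widehat x_n$, not only lifts already in $\widehat F$ or lifts at a particular stage. This depends on condition $(F)$ holding at every later stage $\beta$ for the pair $(\alpha,\alpha)$. It also depends on the point masses at the lifts lying in $B_n(\alpha,\alpha,\beta)$, which holds because they have norm one and push forward to $\delta_{x_n}$. Both facts are supplied by the compatibility section and Lemma~\ref{lem:paired-successor-step}, so the argument reduces to the parity bookkeeping above.
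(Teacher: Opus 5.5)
Your proposal is correct and follows the paper's proof essentially step for step. You lift to $\widehat x_n\in\widehat F$, read off the parity of $H(\widehat x_{r_q})$ with respect to $\{t:t(\alpha)=1\}$ via Lemma~\ref{lem:parity}, extend $H^{-1}(c_\alpha\cap\widehat G)$ to a clopen $A\subseteq T$ separating $N_0(\alpha,\alpha)$ from $N_1(\alpha,\alpha)$, and contradict the $\fa_\alpha$-limit given by Lemma~\ref{lem:source-filter}; you also correctly note that the real content lies in $(C)$, $(D)$, $(F)$ holding for $(\alpha,\alpha)$ at all later stages, i.e.\ in Lemmas~\ref{lem:paired-successor-step} and~\ref{lem:source-filter}.
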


\begin{proof}
Assume, towards a contradiction, that such an \(H\) exists.  For every \(n\in\omega\), choose
\[
        \widehat x_n\in \widehat F,
        \qquad \pi_\alpha(\widehat x_n)=x_n,
\]
which is possible because \(\pi_\alpha[\widehat F]=F\).  In particular, this chooses the points \(\widehat x_{r_q}\) for \(q\in Q^*\).
Then
\[
        \pi_\alpha H(\widehat x_{r_q})=h(x_{r_q})=y_{r_q}.
\]
By Lemma \ref{lem:parity}, the point \(H(\widehat x_{r_q})\) belongs to the final pullback of \(c_\alpha\) if and only if \(q\) is even.

The set
\[
        H^{-1}(c_\alpha\cap \widehat G)
\]
is clopen in \(\widehat F\).  Since \(T\) is zero-dimensional and \(\widehat F\) is closed, this relative clopen set extends to a clopen set \(A\subseteq T\).  Therefore
\[
        \indicator_A(\widehat x_{r_q})=
        \begin{cases}
        1,& q\in Q^*,\ q\text{ even},\\
        0,& q\in Q^*,\ q\text{ odd}.
        \end{cases}
\]
Thus \(\indicator_A(\widehat x_n)\) is equal to \(1\) on \(N_0(\alpha,\alpha)\) and to \(0\) on \(N_1(\alpha,\alpha)\).  If this \(0\)-\(1\)-valued sequence had an \(\fa_\alpha\)-limit, then either its \(1\)-set or its \(0\)-set would belong to \(\fa_\alpha\).  But the \(1\)-set misses \(N_1(\alpha,\alpha)\), while the \(0\)-set misses \(N_0(\alpha,\alpha)\); every member of \(\fa_\alpha\) meets both of these sets infinitely.  Hence the sequence has no \(\fa_\alpha\)-limit, contradicting Lemma~\ref{lem:source-filter}.
\end{proof}

\section{The predicted construction}\label{sec:predicted}

We now assemble the local lemma into a full construction.

Assume \(\diamondsuit(S)\), where \(S\subseteq\omega_1\) is stationary and co-stationary.  Fix in advance an ``ordinary'' scheduler
\[
        \sigma:\omega_1\longrightarrow \omega_1\times\omega_1
\]
with the following two properties: if \(\sigma(\alpha)=(\theta,
\delta)\), then \(\theta\leqslant\alpha\) and \(\delta\leqslant\alpha\), and, for every pair \((\theta,
\delta)\in[\omega,\omega_1)\times\omega_1\), the set
\[
        \{\alpha\in\omega_1\setminus S:\alpha\geqslant\max\{\theta,\delta\},
        \ \sigma(\alpha)=(\theta,
\delta)\}
\]
is cofinal in \(\omega_1\).  Such a scheduler exists because \(\omega_1\setminus S\) has cardinality \(\omega_1\) and can be partitioned into \(\omega_1\) many cofinal sets.

The actual scheduling map \(\eta\) is defined recursively.  If \(\alpha\in S\) and the diamond guess at \(\alpha\) is a valid code for disjoint infinite closed sets \(F,G\subseteq K_\alpha\) and a homeomorphism \(h:F\to G\), we declare \(\eta(\alpha)=(\alpha,0)\) and use the diagonal successor step below.  At every other stage, including invalid diamond guesses, we put \(\eta(\alpha)=\sigma(\alpha)\) and perform Talagrand's ordinary successor step.  Thus a valid diamond stage may postpone one ordinary task, but no ordinary task is lost, because every ordinary pair is scheduled cofinally often on \(\omega_1\setminus S\) above both coordinates.

At a valid diamond stage \(\alpha\in S\), choose \(x,x_n,y,y_n,P,Q,P_n,Q_n,R_n\) as in Section~\ref{sec:anti-extension}.  Reserve the two measure indices \(0\) and \(\alpha\) in the enumeration at stage \(\alpha\), and set
\[
        U_n(\alpha,0)=P_n\cup Q_n,
        \qquad
        \nu_n(\alpha,0)=\delta_{h(x_n)},
        \qquad
        \nu_n(\alpha,\alpha)=\delta_{x_n}.
\]
All remaining admissible sequences on \(K_\alpha\) are placed into the unused indices of the enumeration \((\nu_n(\alpha,
\delta))_{\delta<\omega_1}\).  Since \(\diamondsuit\) implies CH and \(K_\alpha\) is metrisable, there are only \(\omega_1\) many such sequences.

The diamond code is decoded using only \(K_\alpha\), already constructed at the beginning of stage \(\alpha\).  If the code is invalid, or if one of the two closed sets is finite, no diagonal object is introduced and the stage is ordinary.

\begin{proposition}\label{prop:construction-valid}
The above recursion produces a realisation of Talagrand's construction satisfying conditions \((A)\)--\((G)\).  Hence its final compactum \(T\) satisfies Talagrand's conclusion: \(C(T)\) has the Grothendieck property and the weak-star compact unit ball of \(C(T)^*\) contains no copy of \(\beta\omega\).
\end{proposition}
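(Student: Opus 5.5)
The plan is a transfinite induction on $\alpha\leqslant\omega_1$, verifying that conditions $(A)$--$(G)$ hold for all pairs defined below $\alpha$ at every stage. At the end we invoke Talagrand's theorem \cite[Section III]{Talagrand1980}, whose proof of the Grothendieck property and of the absence of $\beta\omega$ in $M_1(T)$ uses only these conditions, together with totality of the enumerations and eventual scheduling of every admissible pair. The induction naturally splits into three kinds of stage.

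\emph{Limit stages.} At a limit stage $\rho$, $K_\rho$ is the inverse limit. Conditions $(A)$--$(G)$ for $\alpha=\rho$ are handled exactly as in Talagrand, by a diagonal choice of $I(\theta,\rho)$ and $J(\theta,\delta,\rho)$ over a cofinal $\omega$-sequence in $\rho$. No diamond data is involved, so nothing changes.

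\emph{Ordinary successor stages.} This covers $\alpha\notin S$ as well as invalid guesses. There $\eta(\alpha)=\sigma(\alpha)$, and the step is literally Talagrand's. The only thing to check is that the enumerations remain total. At each stage $\alpha$ the indices $\delta<\omega_1$ list every admissible sequence on $K_\alpha$; under CH there are $\omega_1$ of them, and reserving the indices $0$ and $\alpha$ at diamond stages only reorders the list.

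\emph{Valid diamond stages.} Here Lemma~\ref{lem:paired-successor-step} already gives $(a)$--$(h)$ and $(A)$--$(G)$ at $\alpha+1$. This covers the new pair $(\alpha,\alpha)$ together with $(F)$ along $J(\alpha,\alpha,\alpha+1)$, and $(G)$ for the target pair $(\alpha,0)$, since $\nu_n(\alpha,0)=\delta_{y_n}$ is admissible with $W_n=Q_n$.

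Two points need care beyond the local lemmas. First, the new pair $(\alpha,\alpha)$ must be propagated through later stages $\beta>\alpha$ like any other pair: later $J(\alpha,\alpha,\beta)$ are chosen by Talagrand's rules as almost-subsets of earlier ones that still meet $N_0,N_1$ infinitely. This is exactly how Talagrand treats pairs introduced at any stage, so the convention $\delta\leqslant\alpha$ (rather than $\delta<\alpha$) must be admitted in $(D)$--$(F)$; I would check that his proof never uses strictness. Second, the scheduling. Every pair $(\theta,\delta)$ must still be treated at some successor $\alpha'+1$ where $(G)$ applies. This holds because $\sigma$ schedules each pair cofinally often on $\omega_1\setminus S$ above its coordinates, and those stages are never overridden, since overriding happens only on $S$. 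This is precisely the cofinal-schedule convention under which Lemma~\ref{lem:splitter} was proved. In particular Lemma~\ref{lem:splitter} and Corollary~\ref{cor:borel-splitter} hold for $T$, and those are the ingredients from which Talagrand derives his conclusions.

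The main obstacle I expect is confirming that Talagrand's deduction of the Grothendieck property and of the non-embedding of $\beta\omega$ does not rely on the original bijective schedule or on an injective enumeration. For instance, it might require that $\eta(\alpha)$ never repeats a pair, or that the pairs $(\alpha,0)$ are generic. I would go through his final argument step by step. Grothendieck follows from the splitter: a weak$^*$-null but not weakly null sequence yields disjointly supported normalised measures, and Lemma~\ref{lem:splitter} splits them. The absence of $\beta\omega$ follows from $(F)$: every admissible sequence has a convergent subsequence along $J$, whereas $\beta\omega$ would produce an independent family. At each step I would note that only $(A)$--$(G)$, totality and cofinal scheduling are used. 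Repetitions are harmless because $(G)$ at extra stages only adds more splittings, and the reserved sequences are themselves admissible entries of the enumeration.
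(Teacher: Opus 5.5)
Your proposal is correct and follows the paper's proof essentially step for step. Ordinary and invalid-guess stages are Talagrand's successor step, valid diamond stages are covered by Lemma~\ref{lem:paired-successor-step} and the compatibility table, limit stages are unchanged, and the cofinal scheduler \(\sigma\) on \(\omega_1\setminus S\) with exhaustive enumerations lets Talagrand's final argument apply. Your worry about \(\delta\leqslant\alpha\) is unnecessary, since at stage \(\alpha+1\) the pair \((\alpha,\alpha)\) already satisfies \(\theta,\delta<\alpha+1\). However, your one-line Grothendieck sketch should not replace the citation of Talagrand: Lemma~\ref{lem:splitter} only splits the normalised restrictions to the \(W_n\) and gives no control of \(\mu_n(Z\setminus W_n)\) for the original weak\(^*\)-null measures \(\mu_n\).
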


\begin{proof}
At non-diagonal stages, and at invalid diamond stages, this is Talagrand's original successor construction.  At a valid diagonal stage, the target sequence \((\nu_n(\alpha,0))\) is the sequence treated by \(\eta(\alpha)=(\alpha,0)\), and the finite requirements \((a)\)--\((h)\) are satisfied exactly as in Section~\ref{sec:compatibility}.  Lemma~\ref{lem:paired-successor-step} checks the only new sets, namely the added source indices \(r_q\), the set \(J(\alpha,\alpha,\alpha+1)\), and the new pieces \(N_0(\alpha,\alpha),N_1(\alpha,\alpha)\).  The table in Section~\ref{sec:compatibility} records which of \((A)\)--\((G)\) are unchanged and which are affected.  Thus \((A)\)--\((G)\) hold at \(\alpha+1\).  Limit stages are inverse limits and are unchanged.

Finally, the ordinary scheduler \(\sigma\) treats every admissible clopen-supported signed measure sequence cofinally often on \(\omega_1\setminus S\), and the enumerations at each metrisable stage remain exhaustive.  Hence the proof of Talagrand's Theorem~4 applies to the final compactum.
\end{proof}

\begin{theorem}\label{thm:no-homeomorphic-disjoint}
For the final compactum \(T\) of the diamond construction, no two disjoint non-metrisable closed subspaces of \(T\) are homeomorphic.
\end{theorem}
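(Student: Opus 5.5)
The plan is a proof by contradiction that assembles Lemma~\ref{lem:reflection}, Lemma~\ref{lem:diamond-guess} and Proposition~\ref{prop:persistent}. Suppose \(F,G\subseteq T\) are disjoint non-metrisable closed subspaces and \(H:F\to G\) is a homeomorphism.

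First apply Lemma~\ref{lem:reflection} to obtain a club \(C\subseteq\omega_1\). For every \(\alpha\in C\) we have \(F_\alpha\cap G_\alpha=\varnothing\), and there is a homeomorphism \(H_\alpha:F_\alpha\to G_\alpha\) with \(\pi_\alpha H=H_\alpha\pi_\alpha\). By clause (iii), after shrinking \(C\) to a tail (still a club), both \(F_\alpha\) and \(G_\alpha\) are infinite for \(\alpha\in C\); for \(G_\alpha\) this also follows from \(G_\alpha=H_\alpha[F_\alpha]\).

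Next apply Lemma~\ref{lem:diamond-guess}, intersecting its club with the tail just obtained. This produces an \(\alpha\in S\cap C\) such that the diamond guess at \(\alpha\) is exactly the canonical stage-\(\alpha\) code of \((F_\alpha,G_\alpha,H_\alpha)\). Since decoding at stage \(\alpha\) uses only \(K_\alpha\), and \(F_\alpha,G_\alpha\) are disjoint infinite closed subsets of \(K_\alpha\) with \(H_\alpha\) a homeomorphism, this code is valid. The recursion of Section~\ref{sec:predicted} therefore declares \(\alpha\) a valid diamond stage and performs the diagonal successor step with \(F=F_\alpha\), \(G=G_\alpha\) and \(h=H_\alpha\).

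Proposition~\ref{prop:construction-valid} says the rest of the recursion continues to satisfy \((A)\)--\((G)\), so \(T\) is a later continuation in the sense of Proposition~\ref{prop:persistent}. With \(\widehat F=F\) and \(\widehat G=G\) we have \(\pi_\alpha[\widehat F]=F_\alpha\), \(\pi_\alpha[\widehat G]=G_\alpha\) and \(\pi_\alpha H=h\pi_\alpha\). This is exactly the situation that Proposition~\ref{prop:persistent} forbids, which gives the contradiction.

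The main obstacle is the matching step: checking that the object guessed and decoded at stage \(\alpha\) coincides with the one to which the anti-extension applies. Concretely, the persistence argument, through Lemma~\ref{lem:source-filter}, needs condition \((F)\) for the pair \((\alpha,\alpha)\) at all later stages \(\beta>\alpha\), not just at \(\alpha+1\). I would handle this by citing Proposition~\ref{prop:construction-valid}, since the induction maintains \((A)\)--\((G)\) for all pairs, including the newly introduced source pair. I would also note that the choice of \(x,x_n\) in Section~\ref{sec:anti-extension} only needs \(F_\alpha\) to be infinite, and compact metrisable infinite spaces have non-isolated points, which is why clause (iii) is required.
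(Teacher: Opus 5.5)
Your proposal is correct and follows the paper's proof step for step. You reflect \(H\) to a club via Lemma~\ref{lem:reflection}, pick \(\alpha\in S\) in that club where the diamond guess of Lemma~\ref{lem:diamond-guess} decodes \((F_\alpha,G_\alpha,H_\alpha)\), and apply Proposition~\ref{prop:persistent} with \(\widehat F=F\), \(\widehat G=G\); your extra checks (shrinking to a tail so that \(F_\alpha,G_\alpha\) are infinite, validity of the decoded code, and \((C)\), \((F)\) for the source pair \((\alpha,\alpha)\) at all later stages via Proposition~\ref{prop:construction-valid}) only make explicit what the paper's short proof leaves implicit.
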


\begin{proof}
Suppose that \(F,G\subseteq T\) are disjoint closed non-metrisable subspaces and that \(H:F\to G\) is a homeomorphism.  By Lemma \ref{lem:reflection}, there is a club \(C\subseteq\omega_1\) such that, for every \(\alpha\in C\), the projections \(F_\alpha\) and \(G_\alpha\) are disjoint, the map \(H\) reflects to a homeomorphism \(H_\alpha:F_\alpha\to G_\alpha\), and \(F_\alpha\) is infinite.  By Lemma \ref{lem:diamond-guess}, the diamond sequence guesses this reflected triple on a stationary set.  Choose \(\alpha\in S\cap C\) at which the guess is correct.  The stage \(\alpha\) construction then applies Proposition \ref{prop:persistent} to \(H_\alpha:F_\alpha\to G_\alpha\), and forbids any final extension of it.  This contradicts the existence of \(H\).
\end{proof}

\begin{corollary}\label{cor:no-2dcp}
The compactum \(T\) does not have 2DCP.
\end{corollary}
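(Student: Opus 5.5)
The argument goes by contradiction from Theorem~\ref{thm:no-homeomorphic-disjoint}. Suppose \(T\) has 2DCP, witnessed by a sequence \((K_n)_{n\in\omega}\) of non-empty compact subsets of \(T\) such that each \(K_n\) contains two disjoint subsets homeomorphic to \(K_{n+1}\). Every subspace of \(T\) homeomorphic to a compact set is compact, hence closed in \(T\). So each \(K_n\) contains two disjoint closed subspaces \(A_n,B_n\subseteq T\) with \(A_n\cong K_{n+1}\cong B_n\); in particular \(A_n\cong B_n\). By Theorem~\ref{thm:no-homeomorphic-disjoint}, \(K_{n+1}\) must be metrisable for every \(n\). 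The task is therefore to rule out 2DCP sequences all of whose terms (from index \(1\) on) are metrisable.

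For this I would use Theorem~\ref{thm:B}: \(T\) has no non-trivial convergent sequences. A metrisable compact subspace of \(T\) is then finite, since an infinite compact metrisable space contains a non-trivial convergent sequence, and that sequence would converge in \(T\). Hence every \(K_{n}\) with \(n\geqslant1\) is finite and non-empty. Let \(k_n=|K_n|\). Since \(K_n\) contains two disjoint copies of \(K_{n+1}\), we get \(k_n\geqslant 2k_{n+1}\), and so \(k_1\geqslant 2^{n-1}k_n\geqslant 2^{n-1}\) for every \(n\). This contradicts finiteness of \(K_1\).

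The only real input is therefore the already-proved Theorem~\ref{thm:no-homeomorphic-disjoint} together with the absence of convergent sequences from Theorem~\ref{thm:B}. The one point to check is that 2DCP is phrased with subsets homeomorphic to \(K_{n+1}\), rather than closed subspaces given in advance, and that these subsets are automatically closed and compact; this is immediate.

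Alternatively, one could avoid Theorem~\ref{thm:B} by noting that \(T\) is zero-dimensional, so an infinite metrisable compact \(K_{n+1}\) with 2DCP-type self-embeddings would still have to be handled directly. The convergent-sequence route is cleaner, so I would commit to it. I do not expect a serious obstacle here: this corollary is a bookkeeping consequence once Theorem~\ref{thm:no-homeomorphic-disjoint} is in hand.
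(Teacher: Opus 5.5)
Your proposal is correct and follows essentially the paper's proof: both use Theorem~\ref{thm:no-homeomorphic-disjoint}, the absence of non-trivial convergent sequences, and the halving count \(|K_n|\geqslant 2|K_{n+1}|\). The only difference is the order. You deduce metrisability, then finiteness, then the counting contradiction, whereas the paper first rules out finite terms, concludes that \(K_1\) is non-metrisable, and then applies Theorem~\ref{thm:no-homeomorphic-disjoint} once.
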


\begin{proof}
Assume that \((K_n)_{n\in\omega}\) witnesses 2DCP in \(T\).  No \(K_n\) is finite: if \(K_N\) were finite, then \(|K_N|\geqslant 2^m|K_{N+m}|\geqslant 2^m\) for every \(m\), a contradiction.  Since \(T\) contains no non-trivial convergent sequence by Proposition \ref{prop:construction-valid} and Talagrand's theorem, no infinite compact subspace \(K_n\) is metrisable.  In particular, \(K_1\) is non-metrisable.  But \(K_0\) contains two disjoint compact subsets homeomorphic to \(K_1\).  These are disjoint non-metrisable closed homeomorphic subspaces of \(T\), contradicting Theorem \ref{thm:no-homeomorphic-disjoint}.
\end{proof}

\begin{proof}[Proof of Theorems~\ref{thm:A} and~\ref{thm:B}]
Theorem~\ref{thm:A} is exactly Theorem~\ref{thm:no-homeomorphic-disjoint} together with Corollary~\ref{cor:no-2dcp}.  Theorem~\ref{thm:B} follows from Proposition~\ref{prop:construction-valid}, which preserves Talagrand's conditions \((A)\)--\((G)\) and hence Talagrand's original conclusion.
\end{proof}

\section{Classical pointwise sequence quotients}\label{sec:pointwise-quotients}

This section is independent of the diamond diagonalisation.  Its first result is specific to spaces \(C_{\ptop}(X)\), but is valid for every Tychonoff space \(X\).  The proof uses the finite-support description of \(C_{\ptop}(X)'\) and a functional-boundedness lemma of Banakh and Gabriyelyan.  Talagrand's compactum enters only afterwards, through the Grothendieck property of \(C(K)\) and the absence of a Banach quotient \(\ell_\infty\).

Following Banakh and Gabriyelyan, a locally convex space \(E\) has the \emph{Josefson--Nissenzweig property}, abbreviated JNP, if the identity map
\[
        (E',\sigma(E',E))\longrightarrow(E',\beta^*(E',E))
\]
is not sequentially continuous, where \(\beta^*(E',E)\) is the topology of uniform convergence on barrel-bounded subsets of \(E\) \cite[Definition~1.4]{BanakhGabriyelyan2023}.  For pointwise function spaces this agrees with the earlier definition of Banakh, the second-named author, and \'{S}liwa: \(C_{\ptop}(X)\) has JNP if and only if its dual contains a weak-star null sequence of finitely supported signed measures of norm one \cite[Corollary~3.10]{BanakhGabriyelyan2023}.  Moreover, this is equivalent to the existence of a quotient of \(C_{\ptop}(X)\) isomorphic to \((c_0)_{\ptop}\) \cite{BKS2019}.

\begin{proof}[Proof of Theorem~\ref{thm:cp-lp-quotient}]
Let
\[
        Q:C_{\ptop}(X)\longrightarrow(\ell_p)_{\ptop}
\]
be a continuous linear surjection.  For \(n\in\omega\), let \(e_n^*\) be the \(n\)-th coordinate functional on \(\ell_p\), and put
\[
        \mu_n=e_n^*\circ Q\in C_{\ptop}(X)'.
\]
Every \(\mu_n\) is a finitely supported signed measure on \(X\), and surjectivity of \(Q\) implies \(\mu_n\neq0\) for every \(n\).  For each \(f\in C(X)\), the vector \(Qf\) belongs to \(\ell_p\), and therefore
\[
        \mu_n(f)=(Qf)_n\longrightarrow0.
\]
Thus \((\mu_n)\) is weak-star null.  In particular, regarded as a subset of the dual of the Banach space \(C_b(X)\), the set \(M=\{\mu_n:n\in\omega\}\) is weak-star bounded.  Banakh and Gabriyelyan's support lemma implies that
\[
        S=\bigcup_{n\in\omega}\operatorname{supp}\mu_n
\]
is functionally bounded in \(X\) \cite[Lemma~3.4]{BanakhGabriyelyan2023}.

We claim that
\[
        \inf_{n\in\omega}\|\mu_n\|>0,
\]
where \(\|\mu_n\|\) is the total variation norm.  Otherwise, after passing to a strictly increasing sequence \((n_k)\), we could arrange
\[
        \|\mu_{n_k}\|\leqslant 2^{-2k}\qquad(k\geqslant1).
\]
Let \(y\in\ell_p\) be defined by
\[
        y_{n_k}=2^{-k},\qquad y_n=0\quad(n\notin\{n_k:k\geqslant1\}).
\]
By surjectivity, choose \(f\in C(X)\) with \(Qf=y\).  Since \(S\) is functionally bounded,
\[
        M_f:=\sup_{x\in S}|f(x)|<\infty.
\]
For every \(k\geqslant1\),
\[
        2^{-k}=|\mu_{n_k}(f)|
        \leqslant \|\mu_{n_k}\|M_f
        \leqslant M_f2^{-2k},
\]
which is impossible for sufficiently large \(k\).  This proves the claim.

Choose \(c>0\) with \(\|\mu_n\|\geqslant c\) for all \(n\), and set
\[
        \nu_n=\frac{\mu_n}{\|\mu_n\|}.
\]
Then \((\nu_n)\) is a weak-star null sequence of finitely supported signed measures of norm one.  By the characterisation quoted above, \(C_{\ptop}(X)\) has JNP and hence a quotient isomorphic to \((c_0)_{\ptop}\).
\end{proof}

\begin{remark}\label{rem:lp-cp-specific}
Theorem~\ref{thm:cp-lp-quotient} is a \(C_{\ptop}(X)\)-specific statement.  Its proof uses both the finite-support structure of \(C_{\ptop}(X)'\) and the functional boundedness of the union of the supports.  In particular, no assertion that JNP pulls back through arbitrary locally convex quotient maps is used.
\end{remark}

\begin{proposition}\label{prop:no-c0-linfty-pointwise}
Let \(K\) be a compactum satisfying Talagrand's Banach-space conclusion: \(C(K)\) is Grothendieck, and the weak-star compact unit ball \(M_1(K)\) contains no copy of \(\beta\omega\).  Then there is no continuous linear surjection
\[
        C_{\ptop}(K)\onto (c_0)_{\ptop}
\]
and no continuous linear surjection
\[
        C_{\ptop}(K)\onto (\ell_\infty)_{\ptop}.
\]
\end{proposition}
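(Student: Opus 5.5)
The plan is to treat the two surjections separately, in each case using a closed graph argument to upgrade the pointwise surjection to a bounded Banach surjection.

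\emph{The case \((c_0)_{\ptop}\).} Suppose \(Q:C_{\ptop}(K)\onto(c_0)_{\ptop}\) is a continuous linear surjection. We first show that \(Q\), viewed as a map \(C(K)\to c_0\) between the Banach spaces, is bounded. Its graph is closed: if \(f_k\to f\) uniformly and \(Qf_k\to y\) in norm, then \(f_k\to f\) pointwise, so \(Qf_k\to Qf\) coordinatewise. Norm convergence \(Qf_k\to y\) also gives coordinatewise convergence, hence \(Qf=y\). By the closed graph theorem \(Q\) is a bounded linear surjection \(C(K)\to c_0\), and it is open by the open mapping theorem. Then \(c_0\) is a quotient of the Grothendieck space \(C(K)\). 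Quotients of Grothendieck spaces are Grothendieck, and \(c_0\) is not Grothendieck, so this is a contradiction.

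An alternative route for this case avoids Banach-space machinery. The functionals \(\mu_n=e_n^*\circ Q\) are finitely supported and weak-star null, and the norm bound \(\inf\|\mu_n\|>0\) follows exactly as in the proof of Theorem~\ref{thm:cp-lp-quotient}, with \(y_{n_k}=2^{-k}\in c_0\). This yields a weak-star null normalised sequence in \(M(K)\) that is not weakly null, which contradicts the Grothendieck property directly. Either argument suffices; I would present the closed graph version because it is the one reused below.

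\emph{The case \((\ell_\infty)_{\ptop}\).} The same closed graph argument makes \(Q:C(K)\to\ell_\infty\) a bounded surjection, hence a quotient map. Because \(\ell_\infty\) is injective, I expect this quotient to be usable as follows. Lift the unit vector basis: by openness there are \(f_n\in C(K)\) with \(\sup\|f_n\|<\infty\) and \(Qf_n=e_n\). The adjoint \(Q^*:\ell_\infty^*\to M(K)\) is an isomorphic embedding that is weak-star continuous. Restricted to the weak-star compact set of \(\{0,1\}\)-valued ultrafilter functionals, which is a copy of \(\beta\omega\) inside \(B_{\ell_\infty^*}\), the map \(Q^*\) is injective and weak-star continuous, so its image is homeomorphic to \(\beta\omega\). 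After scaling by \(\|Q\|^{-1}\), this image sits inside the unit ball \(M_1(K)\). That contradicts the second half of Talagrand's conclusion.

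The main obstacle is to make sure the embedded copy really lies in \(M_1(K)\) and is homeomorphic to \(\beta\omega\) in the weak-star topology. Injectivity comes from \(Q^*\) being injective, since \(Q\) is surjective. Continuity comes from weak-star continuity of the adjoint. Compactness then upgrades the injective continuous map to a homeomorphism, and scaling takes care of the norm bound. As a fallback, the Grothendieck property alone already rules out this case: a bounded surjection onto \(\ell_\infty\) yields, via Rosenthal--Lacey applied inside \(\ell_\infty\), a further quotient onto \(\ell_2\), but that gives no contradiction. So I would rely on the \(\beta\omega\) argument.
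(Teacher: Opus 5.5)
Your proof is correct and follows the paper's argument. In both cases you use the closed graph theorem to upgrade the pointwise surjection to a bounded Banach surjection. For $c_0$ you then use that quotients of Grothendieck spaces are Grothendieck. For $\ell_\infty$ you push the Dirac measures on $\beta\omega$ forward by the injective weak-star continuous adjoint $Q^*$ and rescale into $M_1(K)$; your compactness argument is exactly what the paper's ``isomorphic weak-star embedding'' amounts to.

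Two small remarks on the side material. In your optional alternative for $c_0$, the claim that the normalised finitely supported sequence is not weakly null needs a justification: use the Schur property of the closed subspace $\ell_1(K)\subseteq M(K)$. The lifting of the vectors $e_n$ and the injectivity of $\ell_\infty$ are never used.
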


\begin{proof}
Suppose first that \(S:C_{\ptop}(K)\to (c_0)_{\ptop}\) is a continuous linear surjection.  Regard the same algebraic map as a map \(S:C(K)\to c_0\).  Its graph is closed for the Banach norms.  Indeed, if \(f_m\to f\) uniformly and \(Sf_m\to y\) in \(c_0\), then \(f_m\to f\) pointwise on \(K\), and by \(C_{\ptop}\)-continuity each coordinate of \(Sf_m\) converges to the corresponding coordinate of \(Sf\).  Norm convergence in \(c_0\) also implies coordinatewise convergence to \(y\), so \(y=Sf\).  The closed graph theorem makes \(S:C(K)\to c_0\) a bounded Banach-space surjection.  This is impossible because quotients of Grothendieck spaces are Grothendieck, whereas \(c_0\) is not Grothendieck.

The same closed graph argument applies to a continuous linear surjection \(C_{\ptop}(K)\to(\ell_\infty)_{\ptop}\).  It would give a Banach-space quotient \(Q:C(K)\onto\ell_\infty=C(\beta\omega)\).  Then \(Q^*:(\ell_\infty)^*\to M(K)\) is an isomorphic weak-star--weak-star embedding.  The Dirac measures \((\delta_p)_{p\in\beta\omega}\) form a weak-star copy of \(\beta\omega\) in a bounded ball of \( (\ell_\infty)^*\); after applying \(Q^*\) and rescaling, the unit ball \(M_1(K)\) would contain a weak-star copy of \(\beta\omega\), contradicting Talagrand's theorem.
\end{proof}

\begin{proof}[Proof of Corollary~\ref{cor:talagrand-classical-quotients}]
The cases \((c_0)_{\ptop}\) and \((\ell_\infty)_{\ptop}\) are Proposition~\ref{prop:no-c0-linfty-pointwise}.  If \(1\leqslant p<\infty\) and \(C_{\ptop}(K)\) had a quotient isomorphic to \((\ell_p)_{\ptop}\), Theorem~\ref{thm:cp-lp-quotient} would give a quotient \(C_{\ptop}(K)\to(c_0)_{\ptop}\), contradicting Proposition~\ref{prop:no-c0-linfty-pointwise}.
\end{proof}

Corollary~\ref{cor:talagrand-classical-quotients} is intentionally narrower than the full separable quotient problem for \(C_{\ptop}(K)\).  It rules out the classical coordinate quotients \((c_0)_{\ptop}\), \((\ell_p)_{\ptop}\) \((1\leqslant p<\infty)\), and \((\ell_\infty)_{\ptop}\).  It does not exclude an arbitrary infinite-dimensional metrisable quotient of \(C_{\ptop}(K)\).  In dual terms, the latter problem asks whether \(C_{\ptop}(K)'=\operatorname{span}\{\delta_x:x\in K\}\) contains an infinite-dimensional countable weak-star closed subspace.  The present argument does not decide that stronger question.

\section{Comments and open questions}

The proof shows more than the failure of 2DCP.  It shows that the diamond-guided Talagrand compactum is rigid against the first obstruction that any 2DCP witness would have to provide: two disjoint closed non-metrisable homeomorphic subspaces.  This is deliberately stronger than merely killing a specified binary tree of compacta.  The \(C_{\ptop}(X)\)-specific \(\ell_p\)-quotient principle in Section~\ref{sec:pointwise-quotients} is independent of Talagrand and holds for every Tychonoff space \(X\).  What is Talagrand-specific is the additional exclusion of \((c_0)_{\ptop}\) and \((\ell_\infty)_{\ptop}\), obtained by combining the closed graph theorem with Talagrand's original Banach-space conclusion.

The theorem does not assert that every realisation of Talagrand's construction fails 2DCP.  Nor does it rule out a different, symmetrically chosen realisation of a related construction having 2DCP.  The point is that Jensen's diamond supplies exactly the additional prediction missing from a bare CH bookkeeping: every final homeomorphism reflects to countable stages on a club, and the diamond sequence guesses one of those reflected traces stationarily often.  The persistent anti-extension lemma then kills the guessed trace at a single successor stage in a way which later coordinates cannot undo.

It is also worth contrasting this with the basic splitter obstruction of Lemma \ref{lem:splitter}.  The splitter alone prevents sequential or measure-convergent side branches, but it cannot by itself rule out 2DCP: Efimov-type spaces with 2DCP exist under additional axioms and, in some cases, in ZFC; see \cite{KKS2026}.  The new ingredient here is not a stronger splitter, but a Boolean anti-extension argument coupled to Talagrand's non-ultrafilter filters.

We leave three natural questions open.

\begin{problem}\label{prob:other-realisation}
Can another realisation of Talagrand's construction scheme have 2DCP?
\end{problem}

\begin{problem}\label{prob:ch}
Can the prediction principle in Theorem~\ref{thm:A} be weakened from \(\diamondsuit(S)\) to CH?
\end{problem}

\begin{problem}\label{prob:stronger-diagonalisation}
Can the diagonalisation be strengthened from homeomorphisms between disjoint closed non-metrisable subspaces to more flexible embeddings into disjoint closed sets, or even to all countable weak-star closed subspaces of \(C_{\ptop}(T)'=\operatorname{span}\{\delta_x:x\in T\}\)?
\end{problem}

A positive answer to Problem~\ref{prob:stronger-diagonalisation} would approach the full metrisable quotient problem for \(C_{\ptop}(T)\), which is not settled here.

\section*{Funding}
T.K. was supported by the Institute of Mathematics, Czech Academy of Sciences (RVO: 67985840).

\end{document}